\providecommand{\U}[1]{\protect\rule{.1in}{.1in}}
\newtheorem{theorem}{Theorem}[section]
\newtheorem{lemma}[theorem]{Lemma}
\newtheorem{proposition}[theorem]{Proposition}
\newtheorem*{quote1}{Theorem \ref{thm1}}
\begin{document}

\author{Lawrence Reeves\\School of Mathematics and Statistics\\University of Melbourne\\Victoria 3010, Australia.\\email: lreeves@unimelb.edu.au
\and Peter Scott\\Mathematics Department\\University of Michigan\\Ann Arbor, Michigan 48109, USA.\\email: pscott@umich.edu
\and Gadde A.~Swarup\\718 High Street Road\\Glen Waverley\\Victoria 3150, Australia.\\email: anandaswarupg@gmail.com }
\title{A deformation theorem for Poincar\'{e} duality pairs in dimension $3$}
\maketitle

\begin{abstract}
We prove the analogue of Johannson's Deformation Theorem for $PD3$ pairs.

\end{abstract}
\date{}

\begin{center}
\textit{Dedicated to Walter Neumann on his 75th birthday}
\end{center}

\section{Introduction}

The object of this paper is to prove an algebraic analogue of Johannson's
Deformation Theorem \cite{Johannson}, a classical result in the theory of
$3$--manifolds. We recall that if $M$ is an orientable Haken $3$--manifold $M$
with incompressible boundary, then the JSJ decomposition of $M$ is given by a
possibly disconnected (and possibly empty) compact submanifold $V(M)$ which is
called the characteristic submanifold of $M$, such that the frontier of $V(M)$
consists of essential annuli and tori, and each component of $V(M)$ is an
$I$--bundle or a Seifert fibre space. Further each component of the closure of
$M-V(M)$ is simple. The existence and uniqueness of this decomposition is due
to Jaco and Shalen \cite{JacoShalen}, and independently Johannson
\cite{Johannson}. In \cite{Johannson}, Johannson went on to prove his
Deformation Theorem which asserts that if $M$ and $N$ are orientable Haken
$3$--manifolds with incompressible boundary, and if $f:M\rightarrow N$ is a
homotopy equivalence, then $f$ can be homotoped so as to map $V(M)$ to $V(N)$
by a homotopy equivalence, and to map the closure of $M-V(M)$ to the closure
of $N-V(N)$ by a homeomorphism.

In order to formulate our algebraic generalization, we need to introduce a
number of concepts.

If $M$ is an orientable Haken $3$--manifold with incompressible boundary, then
$V(M)$ determines a bipartite graph of groups structure $\Gamma_{M}$ for
$G=\pi_{1}(M)$ which is dual to the frontier $frV(M)$ of $V(M)$. The edges of
$\Gamma_{M}$ correspond to the components of $frV(M)$, and the vertices
correspond to the components of $M$ cut along $frV(M)$. Thus all the edge
groups in $\Gamma_{M}$ are free abelian of rank $1$ or $2$.

In \cite{SS03}, as corrected in \cite{SS03errata}, the authors obtained
canonical graph of groups decompositions for almost finitely presented groups
analogous to the above graph of groups structure $\Gamma_{M}$, determined by
the JSJ decomposition of a $3$--manifold $M$. In particular, for many almost
finitely presented groups $G$, and any integer $n\geq1$, they defined a
bipartite graph of groups decomposition $\Gamma_{n,n+1}(G)$, and showed that
when $G$ is the fundamental group of an orientable Haken $3$--manifold $M$
with incompressible boundary, then $\Gamma_{1,2}(G)$ is almost the same as
$\Gamma_{M}$. The differences, which are small but crucial, are discussed in
detail in \cite{SS05}. It is simple to construct the completion $\Gamma
_{n,n+1}^{c}(G)$ of $\Gamma_{n,n+1}(G)$, and then $\Gamma_{1,2}^{c}(G)$ is
equal to $\Gamma_{M}$. Further details are discussed in \cite{SS05}. Note that
as $\Gamma_{1,2}^{c}(G)$ depends only on $G$, it follows that the JSJ
decomposition of a $3$--manifold $M$ is determined by $\pi_{1}(M)$, even
though non-homeomorphic $3$--manifolds may have isomorphic fundamental groups.

A Poincar\'{e} duality pair is the algebraic analogue of an aspherical
manifold with aspherical boundary components whose fundamental groups inject.
It consists of a group $G$ which corresponds to the fundamental group of the
manifold, and a family $\partial G$ of subgroups which correspond to the
fundamental groups of the boundary components, and the whole setup satisfies
an appropriate version of Poincar\'{e} duality. If the fundamental class lies
in $H_{n+2}(G,\partial G)$, we say that the pair is of dimension $n+2$, and
that $(G,\partial G)$ is a $PD(n+2)$ pair. If $M$ is an orientable Haken
$3$--manifold with incompressible boundary, and if $G$ denotes the fundamental
group of $M$, and $\partial G$ denotes the family of fundamental groups of
components of $\partial M$, then the pair $(G,\partial G)$ is an orientable
$PD3$ pair. Thus for an orientable $PD3$ pair $(G,\partial G)$, the
decomposition $\Gamma_{1,2}^{c}(G)$ is our analogue of the JSJ decomposition
of a $3$--manifold.

In \cite{SS05}, the authors considered the structure of the completion
$\Gamma_{n,n+1}^{c}(G)$ of $\Gamma_{n,n+1}(G)$, in the case of orientable
$PD(n+2)$ pairs $(G,\partial G)$, where $n\geq1$. The results obtained were
very closely analogous to the above description for $3$--manifolds. Thus, for
an orientable $PD(n+2)$ pair $(G,\partial G)$, the algebraic analogue of the
JSJ decomposition of a $3$--manifold is the bipartite graph of groups
$\Gamma_{n,n+1}^{c}(G)$. For brevity, we will denote $\Gamma_{n,n+1}^{c}(G)$
by $\Gamma_{G}$ in this paper. This graph of groups has vertices of two types
denoted $V_{0}$ and $V_{1}$. The $V_{0}$--vertices of $\Gamma_{G}$ correspond
to the components of the characteristic submanifold $V(M)$ of a $3$--manifold
$M$, and the $V_{1}$--vertices of $\Gamma_{G}$ correspond to components of
$M-V(M)$. The edges of $\Gamma_{G}$ correspond to the annuli and tori which
form the frontier of $V(M)$ in $M$. For each vertex $v$ of $\Gamma_{G}$, the
edges of the decomposition incident to $v$ determine a family of subgroups of
$G(v)$ which is denoted by $\partial_{1}v$. The authors of \cite{SS05} also
showed that the decomposition of $\partial G$ induced from $\Gamma_{G}$
determines a family of subgroups of $G(v)$ denoted $\partial_{0}v$. Each of
the groups in $\partial_{0}v$ and $\partial_{1}v$ has a natural structure as a
$PD(n+1)$ pair, and $\partial v=\partial_{0}v\cup\partial_{1}v$ naturally has
the structure of a family of $PD(n+1)$ groups obtained by gluing the pairs in
$\partial_{0}v$ and $\partial_{1}v$ along their boundaries. Thus $\partial v$
forms a sort of boundary of $v$, but since the groups in $\partial v$ may not
inject into $G(v)$, we do not have a notion of Poincar\'{e} duality for the
pair $(G(v),\partial v)$. In the case when $n=1$, a $PD2$ pair is a surface
with boundary \cite{Muller}, and $PD2$ groups are surface groups
\cite{EckmannLinnell}\cite{EckmannMuller}. Thus, in the case of a
$3$--dimensional Poincar\'{e} duality pair $(G,\partial G)$, the families
$\partial_{0}v$ and $\partial_{1}v$ consist of surfaces with boundary and
closed surfaces, and the family $\partial v$ consists of closed surfaces, for
each vertex $v$ of $\Gamma_{G}$. In addition, we showed in \cite{RSS} that if
$v$ is a $V_{0}$--vertex of $\Gamma_{G}$ then the pair $(G(v),\partial v)$ is
a $3$--dimensional manifold. 

There is now a long history of algebraic analogues of the JSJ decomposition
(see \cite{SS03} and \cite{SS05} for a discussion), but there has been much
less discussion of the algebraic significance of Johannson's Deformation
Theorem \cite{Johannson}. Johannson proved his Deformation Theorem using his
theory of boundary patterns. A few years later, Swarup \cite{Sw} showed that
in the special case when $M$ and $N$ admit no essential map of an annulus,
there is a much simpler proof of the Deformation Theorem. His argument used
the purely algebraic concept of the number of ends of a pair of groups. In
\cite{Jaco} (see \cite{Jaco2} for an account with more detail), Jaco used
Swarup's idea to give an alternative proof of Johannson's Deformation Theorem.
Several years later, Swarup's idea was extended by Scott and Swarup in
\cite{SS02} to give another proof of the Deformation Theorem. The arguments in
this paper derive from those used in \cite{SS02}, but the proofs here are
different and essentially self-contained. One reason for the differences is
the lack of a theory of Poincar\'{e} duality for pairs $(G,\partial G)$ in
which the boundary groups do not inject into $G$.

In this paper, we will prove the following algebraic analogue of Johannson's
Deformation Theorem \cite{Johannson} for $PD3$ pairs.

\begin{quote1}
Let $(G,\partial G)$ and $(H,\partial H)$ be two $PD3$ pairs with
$G$ isomorphic to $H$, and let $\Gamma_{G}$ and $\Gamma_{H}$ be the
corresponding isomorphic bipartite graphs of groups. If $v$ in $\Gamma_{G}$
and $w$ in $\Gamma_{H}$ are corresponding $V_{1}$--vertices, then the
isomorphism carries $\partial_{1}v$ to $\partial_{1}w$, and $\partial_{0}v$ to
$\partial_{0}w$, and $\partial v$ isomorphically to $\partial w$.
\end{quote1}

That $\partial_{1}v$ is carried to $\partial_{1}w$ follows from the fact that
$\Gamma_{G}$ depends only on $G$, and not on $\partial G$, as was shown in
\cite{SS05}. The new element is that $\partial_{0}v$ is carried to
$\partial_{0}w$. Note that Johannson's Deformation Theorem \cite{Johannson}
contains more information than this in the $3$--manifold setting. For his
result asserts that if $M$ and $N$ are orientable Haken $3$--manifolds with
incompressible boundary, and if $f:M\rightarrow N$ is a homotopy equivalence,
then $f$ can be homotoped so as to map $V(M)$ to $V(N)$ by a homotopy
equivalence, and to map the closure of $M-V(M)$ to the closure of $N-V(N)$ by
a homeomorphism. The algebraic analogue of Theorem \ref{thm1} asserts only
that $f$ is a homeomorphism from the boundary of the closure of $M-V(M)$ to
the boundary of the closure of $N-V(N)$. The approach taken here to prove
Theorem \ref{thm1} yields yet another proof of Johannson's Deformation
Theorem. In section \ref{applnsto3-manifolds}, we sketch this and give a
detailed comparison with the arguments in \cite{SS02}.

\section{Proof of the Main result\label{section:proofofthemainresult}}

For $PD3$ pairs $(G,\partial G)$ and $(H,\partial H)$, the decompositions
$\Gamma_{G}$ and $\Gamma_{H}$ induce decompositions of $\partial G$ and
$\partial H$ giving rise to $\partial_{0}v$ and $\partial_{0}w$. These
decompositions are described in detail in \cite{SS05} using $K(\pi,1)$ spaces
representing the group pairs. We take spaces $(M,\partial M)$ and $(N,\partial
N)$ to represent the above pairs and start with a split homotopy equivalence
$f:M\rightarrow N$ with inverse $g:N\rightarrow M$. As the edge spaces are now
$2$--dimensional annuli and tori, we can homotop $f$ and $g$ to be
homeomorphisms on the edge spaces. For $i=0,1$, we denote by $V_{i}$ the
subspace of $M$ which is the union of all $V_{i}$--vertex spaces, and let
$W_{0}$ and $W_{1}$ denote the corresponding subspaces of $N$. Thus
$M=V_{0}\cup V_{1}$, and $N=W_{0}\cup W_{1}$, and $f(V_{0})\subset W_{0}$,
$f(V_{1})\subset W_{1}$, and $f$ is a homeomorphism on the edge spaces
$V_{0}\cap V_{1}$.

A crucial role in our arguments is played by the fact that an essential
annulus in a $PD3$ pair $(G,\partial G)$ is enclosed by a $V_{0}$--vertex of
$\Gamma_{G}$. In particular, if an essential annulus in $(G,\partial G)$ is
enclosed by a $V_{1}$--vertex of $\Gamma_{G}$, it must be homotopic into an
edge annulus of that vertex.

Now we establish some notation to be used throughout this section. We fix
corresponding $V_{1}$--vertices $v$ and $w$ of $\Gamma_{G}$ and $\Gamma_{H}$
respectively, let $X$ be the component of $V_{1}$ corresponding to $v$, and
let $Y$ be the component of $W_{1}$, corresponding to $w$. Thus $f$ carries
$X$ to $Y$ with $\partial_{1}X$ going homeomorphically to $\partial_{1}Y$,
where $\partial_{1}X$ and $\partial_{1}Y$ denote the unions of the annuli and
tori which form the edge surfaces of $X$ and $Y$ respectively. Also
$\partial_{0}X$ and $\partial_{0}Y$ denote the unions of the surfaces
$X\cap\partial M$ and $Y\cap\partial N$ respectively. Finally $\partial X$
denotes the union $\partial_{0}X\cup\partial_{1}X$, and $\partial Y$ denotes
the union $\partial_{0}Y\cup\partial_{1}Y$.

For each component $\overline{t}$ of $\partial_{0}X$, we want to deform
$\overline{t}$ into $\partial_{0}Y$, and then obtain a homeomorphism from
$\partial X$ to $\partial Y$. Let $M_{t}$ denote the cover of $M$
corresponding to $\pi_{1}(\bar{t})$, and let $N_{t}$ denote the corresponding
cover of $N$. Denote by $t$ the lift of $\overline{t}$ into $M_{t}$, and by
$X_{t}$ the component of the pre-image of $X$ in $M_{t}$ which contains $t$.
Let $Y_{t}$ denote the corresponding component of the pre-image of $Y$ in
$N_{t}$. Let $\partial_{0}X_{t}$ denote the pre-image in $X_{t}$ of
$\partial_{0}X$, and let $\partial_{1}X_{t}$ denote the pre-image in $X_{t}$
of $\partial_{1}X$. Define $\partial_{0}Y_{t}$ and $\partial_{1}Y_{t}$ in the
same way. Finally let $f_{t}$ denote the induced homotopy equivalence from
$M_{t}$ to $N_{t}$.

Let $A$ be an annulus component of $\partial_{1}X$, and let $B$ be the
corresponding annulus component of $\partial_{1}Y$. We are assuming that $f$
maps $A$ to $B$ by a homeomorphism inducing the given isomorphism on the
corresponding edge groups of $\Gamma_{G}$ and $\Gamma_{H}$. But there are two
possible isotopy classes for this homeomorphism, and we may need to alter the
initial choice. We define a \textit{"flip" on }$A$ to be a map from $M$ to
itself which preserves each of $V_{0}$ and $V_{1}$, is the identity outside
some small neighbourhood of $A$, and whose restriction to $A$ is a
homeomorphism which interchanges the components of $\partial A$. Further a
flip on $A$ is homotopic to the identity map of $M$ by a homotopy supported on
a small neighbourhood of $A$. Thus by composing $f$ with a flip on $A$, we can
homotop $f$ to change the initial choice of isotopy class of homeomorphism
from $A$ to $B$, and this homotopy is supported on a small neighbourhood of
$A$.

We start by considering some special cases.

\begin{lemma}
\label{casewhentisanannulus}Suppose that one of the following holds:

\begin{enumerate}
\item A component $\overline{t}$ of $\partial_{0}X$ is an annulus.

\item There is a component $\overline{t}$ of $\partial_{0}X$ such that
$\overline{t}$ is not closed, and $\pi_{1}(\bar{t})$ has finite index in
$\pi_{1}(X)$.
\end{enumerate}

Then $v$ and $w$ are isolated $V_{1}$--vertices of $\Gamma_{G}$ and
$\Gamma_{H}$ respectively. Further we can homotop $f:M\rightarrow N$ to
arrange that it maps $\partial_{1}X$ to $\partial_{1}Y$ by a homeomorphism,
and maps $\partial_{0}X$ to $\partial_{0}Y$ by a homeomorphism. Thus $f$ also
maps $\partial X$ to $\partial Y$ by a homeomorphism.
\end{lemma}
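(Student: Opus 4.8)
The plan is to establish the structural conclusion first — that $v$, and hence $w$, is an isolated vertex — and then to use this to carry out the deformation of $f$. Once $v$ is isolated, the corresponding vertex $w$ of the isomorphic graph $\Gamma_{H}$ is isolated as well, so that $X=V_{1}=M$, $Y=W_{1}=N$, $\partial_{1}X=\partial_{1}Y=\emptyset$, and $\partial_{0}X=\partial M$, $\partial_{0}Y=\partial N$. The assertion that $f$ is a homeomorphism on $\partial_{1}X$ is then vacuous, and the content of the lemma reduces to: homotop $f\colon M\to N$ so that its restriction to $\partial M$ is a homeomorphism onto $\partial N$. So there are really two tasks — showing that the hypothesis forces $v$ to be isolated, and then performing the (now much easier) boundary deformation.

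For the structural step in case~(1): let $\bar t\subset\partial_{0}X$ be an annulus component and suppose $v$ were not isolated, so $\partial_{1}X\neq\emptyset$. From the description of $\partial v$ recalled above — it is a union of closed surfaces assembled from the subsurfaces $\partial_{0}v$ and the edge surfaces $\partial_{1}v$ — the two boundary circles of $\bar t$ must lie on edge annuli of $X$, since they cannot lie on torus edge surfaces. Pushing $\bar t$ slightly into the interior of $X$ gives a properly embedded annulus $\hat t$ with $\partial\hat t\subset\partial_{1}X$, which is incompressible because $\partial M$ is incompressible and is parallel, through a product region, to $\bar t$ together with collars of the adjacent edge annuli. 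Since $X$ is a $V_{1}$--vertex it is simple and carries no essential annulus, so $\hat t$ must be parallel into $\partial_{1}X$; equivalently, by the crucial fact quoted above, $\bar t$ would give an essential annulus enclosed by the $V_{1}$--vertex $v$ and hence homotopic into an edge annulus of $v$. Combining the two parallelisms shows that the edge annuli meeting $\bar t$ are mutually parallel in $X$, and together with incompressibility of $\partial X$ this forces $X$ to close up onto a piece with compressible boundary, which is impossible. Hence $\partial_{1}X=\emptyset$ and $v$ is isolated.

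For case~(2): since $\bar t$ is non-closed, $\pi_{1}(\bar t)$ is free, and being of finite index in $\pi_{1}(X)$ it makes $\pi_{1}(X)$ virtually free, hence, being torsion-free, free. Pass to the finite cover $X_{t}$ of $X$ corresponding to $\pi_{1}(\bar t)$ (the component of the preimage of $X$ in $M_{t}$ containing the lift $t$ of $\bar t$); then $t$ is a boundary subsurface carrying all of $\pi_{1}(X_{t})$, so $t\hookrightarrow X_{t}$ is a $\pi_{1}$--isomorphism of aspherical spaces and therefore a homotopy equivalence. By the $PD3$--pair analogue of the classical fact that an aspherical compact $3$--manifold having a boundary subsurface which carries its fundamental group is a product, $X_{t}$ is a product $t\times I$, and hence $X$, being finitely covered by a product, is an $I$--bundle over a surface. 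But an $I$--bundle over a surface with boundary carries essential annuli and so is enclosed by a $V_{0}$--vertex; combined with $\pi_{1}(X)$ being free this is incompatible with $v$ being a $V_{1}$--vertex having edges — indeed with $\partial v$ being a non-trivial family of closed surfaces — and again forces $v$ to be isolated. It remains to perform the deformation: with $v$ and $w$ isolated, $X=M$ and $Y=N$, and $f\colon M\to N$ is a homotopy equivalence of the $PD3$ pairs $(G,\partial G)$ and $(H,\partial H)$, so restricting to boundaries gives a homotopy equivalence $f|_{\partial M}\colon\partial M\to\partial N$ of closed surfaces; every such homotopy equivalence is homotopic to a homeomorphism, and by the homotopy extension property this homotopy of $f|_{\partial M}$ extends to a homotopy of $f$. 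No annular edge components of $\partial_{1}X$ occur here, so no flips are needed, and $f$ then maps $\partial_{0}X=\partial M$ to $\partial_{0}Y=\partial N$ by a homeomorphism, hence also maps $\partial_{1}X$ (empty) to $\partial_{1}Y$ and $\partial X$ to $\partial Y$ by homeomorphisms.

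The step I expect to be the main obstacle is the structural one — deducing from the hypothesis that $v$ must be isolated. In the $3$--manifold setting this is a routine parallelism-and-enclosing argument, but here it has to be run for the pair $(G(v),\partial v)$ in which $\partial v$ need not inject into $G(v)$, so there is no Poincar\'{e} duality to invoke directly; one is forced to argue on the level of the $K(\pi,1)$'s, the finite covers $M_{t}$, and the enclosing properties of $\Gamma_{G}$ established in \cite{SS05} and \cite{RSS}, rather than with honest cut-and-paste topology.
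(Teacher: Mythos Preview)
Your argument rests on a misreading of the term ``isolated $V_{1}$--vertex''. You take it to mean a graph--theoretically isolated vertex, i.e.\ one with no incident edges, and hence conclude $\partial_{1}X=\emptyset$ and $X=M$. This cannot be right even on the face of the hypotheses: in case~(1) the component $\bar t$ is an \emph{annulus}, hence has nonempty boundary, and $\partial\bar t\subset\partial(\partial_{0}X)=\partial(\partial_{1}X)$, so $\partial_{1}X$ is certainly nonempty. In the Scott--Swarup terminology used here and in \cite{SS05,RSS}, an isolated $V_{1}$--vertex of the completion $\Gamma_{1,2}^{c}(G)$ is a $V_{1}$--vertex whose group coincides with an incident edge group; concretely, $\pi_{1}(X)$ is infinite cyclic and $X$ is (the $K(\pi,1)$ analogue of) a thickened annulus, with $\partial_{1}X$ consisting of two annuli and $\partial_{0}X$ of two annuli. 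The paper's proof confirms this reading: it deduces ``$\pi_{1}(X)$ is infinite cyclic'' from isolation, and then speaks of ``flipping on one of the annuli in $\partial_{1}X$'', which would be meaningless under your interpretation.

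Because of this, essentially every step of your proposal is aimed at the wrong target. For part~(1) the paper does not argue by contradiction on $\partial_{1}X\neq\emptyset$; it invokes Proposition~2.5 of \cite{RSS}, which for an annulus in a $V_{1}$--vertex gives the dichotomy ``homotopic into a single edge annulus rel boundary'' or ``$v$ is isolated'', and then rules out the first alternative because it would make that edge annulus inessential in $(M,\partial M)$. For part~(2) the paper does not pass through virtually free groups or an $I$--bundle classification (which is problematic here anyway, since $(G(v),\partial v)$ need not be a $PD3$ pair, as you yourself note at the end); instead it reduces to part~(1) by producing an annulus component of $\partial_{0}X$. The mechanism is the retraction $\rho\colon X_{t}\to t$: if $\partial_{0}X_{t}=t$, a degree argument on $\rho|_{A}$ for an edge annulus $A$ forces $t$ to be an annulus; if there is another component $s$ of $\partial_{0}X_{t}$, any essential non--peripheral loop in $s$ would give an essential annulus in $(M,\partial M)$ not homotopic into $\partial_{1}X_{t}$, contradicting enclosing, so $s$ is an annulus. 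Finally, the deformation step in the isolated case is not the closed--surface boundary argument you give, but a single flip on one of the two edge annuli to match up the four annuli of $\partial X$ with those of $\partial Y$.
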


\begin{proof}
1) As $v$ is a $V_{1}$--vertex of $\Gamma_{G}=\Gamma_{n,n+1}^{c}(G)$, and
$\overline{t}$ is an annulus in $X$ with boundary in $\partial_{1}X$, we can
apply Proposition 2.5 of \cite{RSS}. This tells us that either $\partial
\overline{t}$ lies in a single component $s$ of $\partial_{1}X$, and that
$\overline{t}$ is homotopic into $s$ fixing $\partial\overline{t}$, or that
$v$ is isolated. The first case would imply that $s$ is homotopic into
$\overline{t}$ fixing $\partial s$, and so is homotopic into $\partial M$
fixing $\partial s$, which contradicts the fact that $s$ is an essential
annulus in $(M,\partial M)$. It follows that $v$ is an isolated $V_{1}%
$--vertex of $\Gamma_{G}$, as required. In particular, $\pi_{1}(X)$ is
infinite cyclic. Hence $\pi_{1}(Y)$ is also infinite cyclic, so that any
component of $\partial_{0}Y$ must be an annulus. Now applying Proposition 2.5
of \cite{RSS} again shows that $Y$ is an isolated $V_{1}$--vertex of
$\Gamma_{H}$, as required. Thus by flipping on one of the annuli in
$\partial_{1}X$ if needed, we can homotop $f$ to map $\partial_{1}X$ to
$\partial_{1}Y$ by a homeomorphism, and simultaneously map $\partial_{0}X$ to
$\partial_{0}Y$ by a homeomorphism. This completes the proof of part 1) of the lemma.

2) We will show that some component of $\partial_{0}X$ is an annulus, so that
the result follows from part 1). We use the above notation. As $\pi_{1}%
(\bar{t})$ has finite index in $\pi_{1}(X)$, it follows that $X_{t}$ is
compact. In particular, $\partial_{0}X_{t}$ is also compact. As the inclusion
of $t$ in $X_{t}$ is a homotopy equivalence, there is a retraction $\rho$ of
$X_{t}$ to $t$.

If $\partial_{0}X_{t}$ equals $t$, and $A$ is an annulus component of
$\partial_{1}X_{t}$, the retraction $\rho$ induces a map from $A$ to $t$ which
sends $\partial A$ by a homeomorphism to two components of $\partial t$. Hence
the map from $A$ to $t$ is a proper map of degree $1$, and so induces a
surjection $\pi_{1}(A)\rightarrow\pi_{1}(t)$. This implies that $t$ is an
annulus, as required.

If $\partial_{0}X_{t}$ is not equal to $t$, we let $s$ be another component of
$\partial_{0}X_{t}$. Any loop $\lambda$ in $s$ is homotopic in $X_{t}$ into
$t$, and so determines an annulus in $X_{t}$. If $s$ is not an annulus, we can
choose $\lambda$ to be an essential non-peripheral loop in $s$. It follows
that the annulus in $X_{t}$ determined by $\lambda$ is $\pi_{1}$--injective,
and cannot be properly homotoped in $X_{t}$ into $\partial_{1}X_{t}$, nor into
$\partial_{0}X_{t}$, while keeping its boundary in $\partial_{0}X_{t}$. It
follows that this annulus is essential in $(M,\partial M)$ and cannot be
homotoped into $\partial_{1}X_{t}$, which contradicts the fact that any
essential annulus in a $PD3$ pair $(G,\partial G)$ is enclosed by a $V_{0}%
$--vertex of $\Gamma_{G}$. This contradiction shows that $s$ must be an
annulus, which completes the proof of part 2) of the lemma.
\end{proof}

Next we consider the case in which $\overline{t}$ is closed. This case was
considered in \cite{Sw} and the proof here is similar (see also \cite{KK}).

\begin{lemma}
\label{casewhentisclosed}Using the above notation, if $\overline{t}$ is a
component of $\partial_{0}X$ which is a closed surface, then we can homotop
$f$ to arrange that $f$ maps $\overline{t}$ to a component of $\partial_{0}Y$
by a homeomorphism.
\end{lemma}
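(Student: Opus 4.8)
The plan is to work in the cover $M_t$ of $M$ corresponding to $\pi_1(\bar t)$, where $\bar t$ is a closed surface. Since $\bar t$ is closed and $\pi_1$-injective in $M$, the lift $t$ of $\bar t$ into $M_t$ is a closed surface carrying all of $\pi_1(M_t)$, so the inclusion $t \hookrightarrow M_t$ is a homotopy equivalence. The key geometric input is that $t$ is then a closed $PD2$ group sitting inside the $PD3$ pair $(M_t, \partial M_t)$ with the quotient module on one side being essentially trivial; more concretely, $t$ separates $M_t$ into (at least) two ends, and the aim is to show one of those ends is a product $\bar t \times [0,\infty)$. This is exactly the "number of ends of a pair of groups" argument of Swarup \cite{Sw}: the pair $(\pi_1(M_t), \pi_1(t))$ has more than one (coend), and a closed aspherical surface group which is a subgroup of a $PD3$ pair in this fashion must actually be a boundary component — i.e. $\bar t$ is homotopic in $M$ into $\partial M$ along a product region, so in fact $\bar t$ is (after homotopy) one of the closed-surface boundary components of $(M,\partial M)$, which is to say a component of $\partial_0 X$ that is already a genuine boundary surface.

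Next I would transport this to $N$ via the homotopy equivalence $f_t : M_t \to N_t$. Push $t$ forward by $f_t$; since $f_t$ is a homotopy equivalence and $t$ carries $\pi_1(M_t) = \pi_1(N_t)$, the image $f_t(t)$ is a degree-one map of the closed surface $\bar t$ into $N_t$, hence $\pi_1(Y_t)$ contains a surface subgroup of the correct type, and the same Swarup-style ends argument applied in $N_t$ shows there is a closed boundary surface $\bar u$ of $\partial_0 Y$ with $\pi_1(\bar u) = \pi_1(\bar t)$ inside $\pi_1(N_t)$. By the solution of the Borel-type rigidity for surfaces (closed $PD2$ groups are surface groups, and homotopy equivalences of surfaces are homotopic to homeomorphisms), the restriction of $f$ (suitably adjusted) gives a homotopy from $\bar t$ to $\bar u$; standard cut-and-paste / homotopy extension arguments on the mapping cylinder then let one modify $f$ on a neighbourhood of the product region between $\bar t$ and $\partial_0 Y$ so that the modified $f$ sends $\bar t$ homeomorphically onto the component $\bar u$ of $\partial_0 Y$. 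Care is needed to ensure this modification is supported away from $\partial_1 X$ so that the homeomorphism already arranged on the edge surfaces is not disturbed, and to ensure that distinct closed components of $\partial_0 X$ are sent to distinct components of $\partial_0 Y$ (injectivity of the resulting map), which follows because $f$ is a homotopy equivalence and hence injective on $\pi_0$ of the boundary up to the bipartite structure.

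**Where the difficulty lies.** The routine parts are the covering-space bookkeeping and the final homotopy-extension argument promoting a homotopy equivalence of surfaces to a homeomorphism. The genuine obstacle is the core ends argument: showing that a closed $\pi_1$-injective surface $\bar t$ carrying $\pi_1$ of a $V_1$-vertex piece must cobound a product region with $\partial_0$ of that piece. The subtlety here — and the reason the proof is not a verbatim copy of \cite{Sw} — is that in the $PD3$-pair setting the boundary groups need not inject into $G$, so there is no Poincar\'e duality available for the pair $(G(v), \partial v)$; one must instead run the argument in the cover $M_t$, where $t$ does carry $\pi_1$ and where one can legitimately talk about the coends of $(\pi_1(M_t), \pi_1(t))$ and invoke the structure of $PD3$ pairs over a surface group. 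I would expect to spend most of the proof carefully setting up that cover, verifying that $t$ is two-sided with at least two coends, and deducing — perhaps via the fact that a $PD3$ pair with a closed-surface subgroup of finite coindex (here, index-related) type forces a product structure — that $\bar t$ is homotopic into $\partial_0 X$, before the transfer to $N$ and the homeomorphism-promotion steps go through essentially formally.
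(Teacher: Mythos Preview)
Your outline has the right instinct (work in the cover $M_t$, transfer to $N_t$, use surface rigidity at the end), but there are two genuine gaps.

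First, the $M$--side discussion is confused. You write that one must show $\bar t$ ``is homotopic in $M$ into $\partial M$ along a product region'' and that ``a closed $\pi_1$--injective surface $\bar t$ carrying $\pi_1$ of a $V_1$--vertex piece must cobound a product region with $\partial_0$ of that piece''. But $\bar t$ is by hypothesis already a component of $\partial_0 X\subset\partial M$, and it does \emph{not} in general carry $\pi_1(X)$; rather $\pi_1(\bar t)$ is merely a subgroup of $\pi_1(X)$, possibly of infinite index. What actually needs to be established on the $M$--side is structural information about $M_t$: every component of $\partial M_t$ other than $t$ is contractible. This uses the enclosing property for essential annuli (any annulus from $t$ to another boundary component would have to be homotopic into $\partial_1 X_t$, impossible since $t$ is closed). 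From this one reads off $H_2(M_t,\partial M_t)=H_3(M_t,\partial M_t)=0$, hence $H_c^1(M_t)=0$ by duality, and this is the invariant that transfers to $N_t$ via the proper homotopy equivalence $f_t$. Your ``coends of $(\pi_1(M_t),\pi_1(t))$'' formulation does not make sense as stated, since $\pi_1(M_t)=\pi_1(t)$; the relevant computation is homological, not an abstract ends count.

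Second, and more seriously, you entirely omit the step that places the target surface inside $Y_t$. The duality argument on the $N$--side produces a unique closed component $s$ of $\partial N_t$ with $\pi_1(s)\cong\pi_1(t)$, and one can deform $f_t(t)$ into $s$ inside $N_t$. But nothing so far forces $s$ to lie in $Y_t$: the homotopy from $f_t(t)$ to $s$ could cross a torus component $T$ of $\partial_1 Y_t$, so that $s$ lies in a different piece of $N_t$ altogether. Ruling this out is a separate argument: one shows there are no annuli in $\partial_1 Y_t$ (since there are none in $\partial_1 X_t$), and then, if $s\notin Y_t$, the torus $T$ and the surface $s$ would carry commensurable subgroups, forcing (via Kropholler--Roller's triviality criterion for $PD3$ pairs, \cite{KR2}) the splitting of $\pi_1(N)$ along $\overline T$ to be trivial --- a contradiction. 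Without this step you have only deformed $\bar t$ into $\partial N$, not into $\partial_0 Y$, and the lemma is not proved.
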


\begin{proof}
Again we use the above notation. We claim that each component of $\partial
M_{t}-t$ is contractible. For suppose a component $r$ of $\partial M_{t}-t$ is
not contractible. Then there is an annulus $A$ from $r$ to $t$, since $\pi
_{1}(t)\rightarrow\pi_{1}(M_{t})$ is an isomorphism. As $r$ and $t$ are
distinct components of $\partial M_{t}$, the annulus $A$ is essential in
$M_{t}$, and so must be properly homotopic to an annulus in $\partial_{1}%
X_{t}$. As $t$ is closed, this is a contradiction which proves the claim.

The long exact homology sequence for the pair $(M_{t},\partial M_{t})$ with
integer coefficients yields the exact sequence
\[
H_{2}(\partial M_{t})\rightarrow H_{2}(M_{t})\rightarrow H_{2}(M_{t},\partial
M_{t})\rightarrow H_{1}(\partial M_{t})\rightarrow H_{1}(M_{t}).
\]
As the inclusion of $t$ into $M_{t}$ is a homotopy equivalence, and the other
components of $\partial M_{t}$ are contractible, it follows that the first and
last maps in this sequence are isomorphisms. Also $H_{3}(M_{t})=H_{3}(t)=0$.
It follows that $H_{3}(M_{t},\partial M_{t})=0$ and $H_{2}(M_{t},\partial
M_{t})=0$. The first equality tells us that $M_{t}$ is not compact, and the
second implies, by duality, that $H_{c}^{1}(M_{t})=0$. Since $f_{t}$ is a
proper homotopy equivalence from $M_{t}$ to $N_{t}$, it follows that $N_{t}$
is not compact and $H_{c}^{1}(N_{t})=0$. Hence $H_{3}(N_{t},\partial N_{t}%
)=0$, and by duality $H_{2}(N_{t},\partial N_{t})=0$. As $H_{2}(N_{t}%
)\cong\mathbb{Z}$, the long exact homology sequence of the pair $(N_{t}%
,\partial N_{t})$ shows that $H_{2}(\partial N_{t})\cong\mathbb{Z}$. Thus
there is exactly one closed component $s$ of $\partial N_{t}$, and the induced
map $H_{2}(s)\rightarrow H_{2}(N_{t})$ is an isomorphism. Now let $\rho$
denote a retraction $M_{t}\rightarrow t$, and consider the composite map
$s\subset N_{t}\overset{g_{t}}{\rightarrow}M_{t}\overset{\rho}{\rightarrow}t$.
Since each of these three maps induces an isomorphism on $H_{2}$, the
composite map $s\rightarrow t$ has degree $1$. Thus the induced map $\pi
_{1}(s)\rightarrow\pi_{1}(t)$ is surjective and hence an isomorphism. Thus $s$
is a retract of $N_{t}$, and $f_{t}\mid t:t\rightarrow N_{t}$ can be deformed
into $s$.

We next want to show that $s$ is actually in $Y_{t}$. Since each component of
$\partial M_{t}-t$ is contractible, there are no annuli in $\partial_{1}X_{t}%
$, and so no annuli in $\partial_{1}Y_{t}$. If $s$ is not in $Y_{t}$, the fact
that $s$ is homotopic into $Y_{t}$ implies that there must be a
non-contractible component of $\partial_{1}Y_{t}$ which can only be a torus.
Denote this torus by $T$, and note that as $s$ is closed, $\pi_{1}(s)$ must be
of finite index in $\pi_{1}(T)$. Consider the images $\overline{s}$ and
$\overline{T}$ in $N$ of $s$ and $T$ respectively. If we cut $N$ along
$\overline{T}$, it follows from \cite{BE01} that we get one or two $PD3$
pairs, depending on whether $\overline{T}$ separates $N$. Let $(K,\partial K)$
denote the pair such that $\partial K$ contains $\bar{s}$. Then $\partial K$
also contains one or two copies of $\overline{T}$, depending on whether
$\overline{T}$ separates $N$. Since $\bar{s}$ and $\overline{T}$ carry
commensurable subgroups of $\pi_{1}(Y)$, it follows from Lemma 2.2 of
\cite{KR2} that $(K,\partial K)$ must be trivial, meaning that $\partial K$
consists of two copies of $K$. Thus $\partial K$ consists entirely of one copy
of $\bar{s}$ and one copy of $\overline{T}$, and each carries $K$. But this
implies that $\overline{T}$ separates $N$ and splits $\pi_{1}(N)$ trivially,
which is a contradiction. It follows that $s$ must lie in $Y_{t}$, and hence
that $t$ can be deformed into $s$ staying in $Y_{t}$. Thus we can homotop $f$
to arrange that $f$ maps $\overline{t}$ to a component of $\partial_{0}Y$ by a
homeomorphism, as required.
\end{proof}

The main part of the proof of Theorem \ref{thm1} will be the remaining cases
in which $\overline{t}$ has boundary, which we handle with a sequence of
propositions. By Lemma \ref{casewhentisanannulus}, we can assume that no
component of $\partial_{0}X$ is an annulus, and that $\pi_{1}(\bar{t})$ has
infinite index in $\pi_{1}(X)$. Using our previous notation, the cover $X_{t}$
of $X$ contains $t$, and $X_{t}$ and $M_{t}$ are non-compact.

\begin{proposition}
\label{prop0}Using the above notation, let $\overline{t}$ be a component of
$\partial_{0}X$ with non-empty boundary, such that $\pi_{1}(\bar{t})$ has
infinite index in $\pi_{1}(X)$, and suppose that no component of $\partial
_{0}X$ is an annulus. Then the following hold.

\begin{enumerate}
\item Each component of $\partial_{1}X_{t}$ which covers an annulus component
of $\partial_{1}X$ is either an annulus meeting $t$ in a single boundary
component, or is contractible.

\item Let $C$ be a non-contractible component of $\partial_{0}X_{t}$, other
than $t$. Then there is an annulus component $A$ of $\partial_{1}X_{t}$, such
that one component of $\partial A$ is contained in $C$, and the other
component of $\partial A$ is contained in $t$. Further $\pi_{1}(C)$ is
infinite cyclic.

\item Each component of $\partial_{1}X_{t}$ which covers a torus component of
$\partial_{1}X$ is contractible.
\end{enumerate}
\end{proposition}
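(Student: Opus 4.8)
The plan is to work entirely in the cover $X_t$, whose fundamental group is $\pi_1(\bar t)$ and into which $t$ includes as a homotopy equivalence; thus there is a retraction $\rho\colon X_t\to t$. Each component of $\partial_1 X_t$ covers an edge annulus or edge torus of $X$, so its fundamental group embeds in $\pi_1(X_t)=\pi_1(t)$, which is a surface group with boundary (a free group); hence every component of $\partial_1 X_t$ is an annulus, a disc, or is contractible (the torus case is ruled out immediately since a $\mathbb Z^2$ subgroup cannot sit in $\pi_1(t)$, which gives part 3 at once, and a disc is a special contractible case). So the real content is to control where the boundary circles of the surviving annulus components $A$ of $\partial_1 X_t$ go, and to identify the non-$t$ components of $\partial_0 X_t$.

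**The annulus components over edge annuli (part 1).**
Let $A$ be a component of $\partial_1 X_t$ covering an annulus component of $\partial_1 X$, and suppose $A$ is not contractible, so $A$ is an annulus (or, degenerately, could carry an index-one subgroup — but $\pi_1(\bar t)$ has infinite index, and the core circle of $A$ has infinite index in $\pi_1 X$ too, so $A$ genuinely has infinite-cyclic $\pi_1$). I would analyze the two boundary circles of $A$. Each lies in $\partial_0 X_t$ or is identified with a boundary circle of another edge surface; but cutting $X$ along $\partial_1 X$ gives $X$ back with boundary pattern $\partial_0 X\cup\partial_1 X$, so in $X_t$ each circle of $\partial A$ lies on some component of $\partial_0 X_t$. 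The key claim is that $A$ cannot have both boundary circles on $t$. If it did, the core of $A$ would be conjugate, in two ways, into $\pi_1(\bar t)$, and pushing $A$ across via $\rho$ would exhibit the edge annulus under $A$ as essential in $(M,\partial M)$ yet properly homotopic (rel boundary, through $X_t$) into $\partial_0 X_t$ — contradicting that $\partial_1 X$ is essential, exactly as in the last paragraph of Lemma~\ref{casewhentisanannulus}. Hence $A$ meets $t$ in at most one boundary circle; the retraction $\rho|_A\colon A\to t$ then has a boundary circle mapping into $\partial t$ only on one side, which (as in the compact case of Lemma~\ref{casewhentisanannulus}) forces $\pi_1(A)\to\pi_1(t)$ to behave so that if $A$ misses $t$ entirely then $A$ is homotopic into $\partial_1 X_t\cup(\partial_0 X_t-t)$, and a short essential-annulus argument rules this out unless $A$ is contractible. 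This yields part 1.

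**The non-$t$ boundary components (part 2).**
Let $C$ be a non-contractible component of $\partial_0 X_t$ other than $t$. Pick an essential loop $\lambda$ in $C$. Since $\pi_1(t)\to\pi_1(X_t)$ is an isomorphism, $\lambda$ is freely homotopic in $X_t$ to a loop in $t$, giving a $\pi_1$-injective annulus $A'$ in $X_t$ from $C$ to $t$. Because $C$ and $t$ are distinct boundary components, $A'$ is essential in $X_t$ and hence (no torus components survive) must be properly homotopic into $\partial_1 X_t$; but $\partial_1 X_t$ has no essential loop homotopic across to $t$ except through an annulus component $A$ of the type in part 1 that actually joins $t$ to $C$. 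So some annulus component $A$ of $\partial_1 X_t$ has one $\partial$-circle in $t$ and the other in $C$ — the first assertion of part 2. For the last assertion, $\pi_1(C)$ is a surface group with boundary hence free; if it were nonabelian we could choose $\lambda$ not conjugate to a power of the core of $A$, rerun the annulus argument, and obtain a second essential annulus from $C$ to $t$ landing on a different component of $\partial_1 X_t$, forcing two distinct boundary circles of $t$ to be joined to $C$; tracking this back through $\rho$ and the essentiality of $\partial_1 X$ gives a contradiction (essentially it would produce an essential annulus in $(M,\partial M)$ enclosed by the $V_1$-vertex $v$ but not homotopic into an edge annulus, contradicting the enclosing principle quoted before Lemma~\ref{casewhentisanannulus}). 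Hence $\pi_1(C)$ is infinite cyclic.

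**Main obstacle.**
The routine parts (part 3, and the surface-group/free-group bookkeeping) are easy; the delicate point is the rigorous version of the ``an essential annulus in $X_t$ must be pushed into $\partial_1 X_t$'' step and its refinement that pins down exactly one annulus component and exactly one boundary circle of $t$. The obstacle is that $X_t$ is noncompact, so one cannot invoke compact $3$-manifold annulus theorems directly; instead I would lift everything to $M_t$, use that $\partial_1 X$ is an essential family in the $PD3$ pair $(M,\partial M)$ together with the enclosing property of $V_0$-vertices, and use the retraction $\rho$ to convert ``homotopic into $\partial_1 X_t$'' statements into statements about $\pi_1$ of $t$ and its boundary. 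Making the degree-one/surjectivity argument (as in part~2 of Lemma~\ref{casewhentisanannulus}) work in the noncompact setting, carefully keeping boundary circles in $\partial_0 X_t$ throughout the homotopies, is where the real care is needed.
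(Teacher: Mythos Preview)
Your proposed proof of part 3 has a genuine gap. You argue that a component $C$ of $\partial_1 X_t$ lying over a torus $T$ must be contractible because $\pi_1(t)$ is free and hence contains no $\mathbb{Z}^2$. But $\pi_1(C)$ is the intersection of (a conjugate of) $\pi_1(T)\cong\mathbb{Z}^2$ with $\pi_1(t)$, and nothing you have said rules out this intersection being infinite cyclic. In that case $C$ would be an open cylinder, not contractible, and part 3 would fail. Your observation only excludes $\pi_1(C)\cong\mathbb{Z}^2$; excluding $\pi_1(C)\cong\mathbb{Z}$ is the entire content of part 3.

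The paper's argument for part 3 is quite different and uses an idea you do not mention. If $\pi_1(C)$ contains an infinite cyclic subgroup $H$, then $H$ sits inside (a conjugate of) the abelian group $\pi_1(T)$, which therefore normalises $H$; since $H$ has infinite index in $\mathbb{Z}^2$, it has infinite index in its normaliser in $\pi_1(M)$. Passing to the cover $M_H$ with fundamental group $H$, the normaliser produces infinitely many distinct lifts of $\bar t$ all carrying $H$, hence infinitely many essential annuli in $(M_H,\partial M_H)$ carrying $H$. Some pair of these must cross, contradicting the fact that they are all enclosed by the $V_1$--vertex $v$. This normaliser/crossing-annuli step is essential and is absent from your proposal.

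Your treatments of parts 1 and 2 are in the right spirit---build an annulus to $t$, invoke the enclosing property at the $V_1$--vertex to push it into $\partial_1 X_t$, and use the retraction $\rho$---but are vaguer than the paper's. For the case $\partial A\subset t$ in part 1, the paper uses a clean degree-one argument: $\rho|_A\colon A\to t$ sends $\partial A$ homeomorphically to two components of $\partial t$, hence has degree one, forcing $t$ to be an annulus. For the case where $A$ misses $t$, the paper constructs an annulus from a boundary circle of $A$ (lying in some $s\subset\partial_0 X_t$ with $s\neq t$) to $t$, pushes it into an edge annulus, and concludes that either that edge annulus is $A$ itself (so $A$ meets $t$) or $s$ has two homotopic boundary circles and is an annulus, forcing its image downstairs to be an annulus. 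For the infinite-cyclic conclusion in part 2, the paper observes directly that the same pushing argument shows every loop in $C$ is homotopic in $C$ into $\partial C$, whence $\pi_1(C)$ is infinite cyclic; your proposed contradiction via a second essential annulus is more circuitous and its final step is not spelled out.
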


\begin{proof}
1) Suppose that $C$ is an annulus component of $\partial_{1}X_{t}$ which does
not meet $t$. There is a $\pi_{1}$--injective annulus $A$ in $X_{t}$ joining a
component of $\partial C$ to $t$. As this component lies in a component $s$ of
$\partial_{0}X_{t}$, the annulus $A$ is in $(M_{t},\partial M_{t})$. As $s$
and $t$ are distinct components of $\partial_{0}X_{t}$, it follows that $A$ is
essential in $(M_{t},\partial M_{t})$, and so must be homotopic into
$\partial_{1}X_{t}$ while keeping $\partial A$ in $\partial_{0}X_{t}$. But
this implies that either $C$ meets $t$, or that $s$ is an annulus, either of
which contradicts our hypotheses. We conclude that any annulus component of
$\partial_{1}X_{t}$ must meet $t$.

Let $A$ be a component of $\partial_{1}X_{t}$ which meets $t$. Thus $A$ is an
annulus. Suppose that $\partial A$ is contained in $t$. There is a retraction
of $X_{t}$ to $t$, and it maps $A$ to $t$ sending $\partial A$ by a
homeomorphism to two components of $\partial t$. Hence this map $A\rightarrow
t$ has degree $1$, which implies that $t$ is an annulus. This contradiction
shows that each component of $\partial_{1}X_{t}$ which meets $t$ is a compact
annulus which meets $t$ in exactly one boundary component. This completes the
proof of part 1).

2) As $C$ is non-contractible, there is a $\pi_{1}$--injective annulus joining
$C$ to $t$. This cannot be homotopic into $\partial_{0}X_{t}$ while keeping
its boundary in $\partial_{0}X_{t}$, and so it must be homotopic into
$\partial_{1}X_{t}$ while keeping its boundary in $\partial_{0}X_{t}$. This
implies that there is an annulus component $A$ of $\partial_{1}X_{t}$, such
that one component of $\partial A$ is contained in $C$, and the other
component of $\partial A$ is contained in $t$. Further this argument shows
that any loop in $C$ is homotopic into $\partial C$, showing that $\pi_{1}(C)$
must be infinite cyclic, as required.

3) Suppose there is a non-contractible component $C$ of $\partial_{1}X_{t}$
which covers a torus component $T$ of $\partial_{1}X$, and let $H$ be an
infinite cyclic subgroup of $\pi_{1}(C)$. As $\pi_{1}(T)$ normalises $H$, it
follows that $H$ is a subgroup of $\pi_{1}(t)$ which has infinite index in its
normalizer. Consider the cover $M_{H}$ of $M$, with $\pi_{1}(M_{H})=H$, and
let $t_{H}$ denote the component of the pre-image of $t$ with $\pi_{1}%
(t_{H})=H$. As $H$ has infinite index in its normalizer, $M_{H}$ has
infinitely many components of $\partial M_{H}$ which contain translates of
$t_{H}$ and have fundamental group $H$. It follows that there are infinitely
many distinct annuli in $(M_{H},\partial M_{H})$, all carrying $H$, so that
there are crossing such annuli. As these annuli are all enclosed by the
$V_{1}$--vertex $v$ of $\Gamma_{H}$, corresponding to $X$, this is a
contradiction, which completes the proof of part 3).
\end{proof}

Now let $A_{1},\dots,A_{n}$ denote the annuli of $\partial_{1}X_{t}$ which
meet $t$, and let $\partial A_{i}=\{a_{1},a_{i}^{\prime}\}$ with $a_{i}$ in
$t$. We have corresponding annuli $B_{1},\dots,B_{n}$ in $\partial_{1}Y_{t}$
with $\partial B_{i}=\{b_{i},b_{i}^{\prime}\}$. We may assume that $f_{t}$
carries $A_{i}$ homeomorphically to $B_{i}$ with $a_{i}$ going to $b_{i}$
initially. Note that in the covering projections $p_{t}:M_{t}\rightarrow M$,
$q_{t}:N_{t}\rightarrow N$ some of these annuli may be identified. An annulus
$A$ in $\partial_{1}X$ which meets $\overline{t}$ lifts to two annuli in
$\partial_{1}X_{t}$ if $\partial A\subset\partial\bar{t}$, and lifts to one
annulus otherwise.

Since the cover $p_{t}:M_{t}\rightarrow M$ is formed with respect to the image
of $\pi_{1}(t)$, we have a clearer picture of the cover $M_{t}$ than of
$N_{t}$. We can use this to obtain some information about the homology of
$X_{t}$, as follows. Each annulus $A_{i}$ lies in a component, say $P_{i}$, of
the closure of $M_{t}-X_{t}$. As the inclusion of $X_{t}$ in $M_{t}$ is a
homotopy equivalence, it follows that $X_{t}$ meets $P_{i}$ only in $A_{i}$,
and that the inclusion of $A_{i}$ into $P_{i}$ is a homotopy equivalence. Note
that $\partial_{0}X_{t}$ equals the intersection $\partial M_{t}\cap X_{t}$.
We let $\partial_{0}P_{i}$ denote the intersection $\partial M_{t}\cap P_{i}$.
Proposition \ref{prop0} tells us that if $\Theta$ is a component of
$\partial_{1}X_{t}$ other than the $A_{i}$'s, then $\Theta$ is contractible.
As the inclusion of $X_{t}$ in $M_{t}$ is a homotopy equivalence, it follows
that $\Theta$ lies in a component, say $P_{\Theta}$, of the closure of
$M_{t}-X_{t}$, that $X_{t}$ meets $P_{\Theta}$ only in $\Theta$, and that
$P_{\Theta}$ is contractible. We have that $M_{t}$ is the union of $X_{t}$,
the $P_{i}$'s, and the $P_{\Theta}$'s, and that $\partial M_{t}$ is the union
of $\partial_{0}X_{t}$, the $\partial_{0}P_{i}$'s, and the $\partial
_{0}P_{\Theta}$'s.

\begin{proposition}
\label{prop1} Using the above notation, let $\overline{t}$ be a component of
$\partial_{0}X$ with non-empty boundary, such that $\pi_{1}(\bar{t})$ has
infinite index in $\pi_{1}(X)$, and suppose that no component of $\partial
_{0}X$ is an annulus. Then $H_{2}(X_{t},\partial_{0}X_{t})\cong\mathbb{Z}^{n}%
$, and is freely generated by $[A_{1}],\dots,[A_{n}]$.
\end{proposition}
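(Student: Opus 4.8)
The plan is to compute $H_2(X_t,\partial_0 X_t)$ by using the decomposition of $M_t$ into $X_t$, the pieces $P_i$ containing the annuli $A_i$, and the contractible pieces $P_\Theta$, together with the fact that $M_t$ is homotopy equivalent to the closed surface $t$. First I would record that $H_2(M_t)\cong H_2(t)\cong\mathbb Z$ and $H_3(M_t)=0$ (if $t$ is nonorientable the argument is run with $\mathbb Z/2$ coefficients, or one notes $H_2(t;\mathbb Z)=0$ and handles that case separately), and that each $P_i$ deformation retracts to $A_i$ while each $P_\Theta$ is contractible. The key geometric input from Proposition~\ref{prop0} and the paragraph preceding this proposition is that $X_t$ meets $P_i$ exactly in $A_i$ and meets $P_\Theta$ exactly in $\Theta$, so $M_t$ is built from $X_t$ by attaching each $P_i$ along the single annulus $A_i$ and each $P_\Theta$ along the single contractible surface $\Theta$.

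Next I would run a Mayer--Vietoris argument. Attaching a $P_\Theta$ along a contractible $\Theta$ changes nothing in homology, so $H_*(M_t)\cong H_*(X_t\cup P_1\cup\dots\cup P_n)$, and since $A_i$ is a homotopy equivalence onto $P_i$, attaching $P_i$ along $A_i$ also does not change the homology: $H_*(M_t)\cong H_*(X_t)$. Hence $H_2(X_t)\cong\mathbb Z$ and $H_3(X_t)=0$. The same bookkeeping applies to the boundary: $\partial M_t$ is the union of $\partial_0 X_t$, the $\partial_0 P_i$, and the $\partial_0 P_\Theta$, glued along the circles $\partial A_i\cap t$ (one boundary circle of each $A_i$ by Proposition~\ref{prop0}(1)) and along the contractible boundary curves of the $\Theta$'s. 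I would then feed this into the long exact sequence of the pair $(M_t,\partial M_t)$ and compare it with the long exact sequence of $(X_t,\partial_0 X_t)$, or more directly excise the pieces $P_i$ and $P_\Theta$: the inclusion $(X_t,\partial_0 X_t)\hookrightarrow(M_t,\partial M_t)$ together with $(P_i,\partial_0 P_i\cup A_i)$ gives a Mayer--Vietoris sequence for the pairs. Since $(P_i,\partial_0 P_i\cup A_i)$ has the homology of $(A_i,\partial A_i)$, which is $H_2\cong\mathbb Z$ generated by $[A_i]$ and $H_1=0$, while $(P_\Theta,\partial_0 P_\Theta\cup\Theta)$ is acyclic, assembling these across all pieces yields $H_2(M_t,\partial M_t)\cong H_2(X_t,\partial_0 X_t)\oplus\bigoplus_i\mathbb Z[A_i]$ with the obvious identifications of the classes $[A_i]$.

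Finally, I would pin down $H_2(X_t,\partial_0 X_t)$ itself. From Lemma~\ref{casewhentisclosed}'s type of computation applied here, $M_t$ is noncompact (each $P_i$ is noncompact since $A_i$ is a homotopy equivalence onto it and $A_i$ has two boundary circles only one of which is capped), so $H_3(M_t,\partial M_t)=0$ and, by Poincar\'e--Lefschetz duality, $H_2(M_t,\partial M_t)\cong H^1_c(M_t)\cong H^1_c(t)=0$ — here I must be careful, because $t$ closed gives $H^1_c(t)=H^1(t)\neq 0$ in general. So instead I would argue directly: the long exact sequence of $(X_t,\partial_0 X_t)$ reads $H_2(\partial_0 X_t)\to H_2(X_t)\to H_2(X_t,\partial_0 X_t)\to H_1(\partial_0 X_t)\to H_1(X_t)$, and the inclusion $t\hookrightarrow X_t$ being a homotopy equivalence means $H_2(t)\to H_2(X_t)$ is an isomorphism; since $t$ is a component of $\partial_0 X_t$ this forces $H_2(\partial_0 X_t)\to H_2(X_t)$ to be surjective, hence $H_2(X_t)\to H_2(X_t,\partial_0 X_t)$ is zero, and likewise $H_1(\partial_0 X_t)\to H_1(X_t)$ factors through the iso $H_1(t)\to H_1(X_t)$ so its kernel maps isomorphically onto $H_2(X_t,\partial_0 X_t)$. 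The kernel of $H_1(\partial_0 X_t)\to H_1(X_t)$ is computed from Proposition~\ref{prop0}(2): the non-$t$ components of $\partial_0 X_t$ are each circles (infinite cyclic $\pi_1$) connected to $t$ through exactly one annulus $A_i$, so $H_1(\partial_0 X_t)\cong H_1(t)\oplus\mathbb Z^n$ with the $\mathbb Z^n$ generated by the extra boundary circles $a_i'$, and each $a_i'$ is homologous in $X_t$ to $a_i\subset t$ via $A_i$; thus the kernel is exactly $\mathbb Z^n$ generated by the classes $[a_i']-[a_i]$, and the boundary map sends $[A_i]\mapsto[a_i']-[a_i]$ up to sign, giving $H_2(X_t,\partial_0 X_t)\cong\mathbb Z^n$ freely generated by $[A_1],\dots,[A_n]$.

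The main obstacle I expect is the careful simultaneous bookkeeping of the pieces $P_i$ versus $P_\Theta$ and of orientability: making sure the contractible pieces genuinely contribute nothing, that each $P_i$ is attached along a single annulus and contributes exactly one $\mathbb Z$, and that the identification of the generator with $[A_i]$ is correct (including signs and the nonorientable surface case, which should be sidestepped by noting a nonorientable closed $t$ has $H_2(t;\mathbb Z)=0$ and running the relevant parts with $\mathbb Z/2$ coefficients or by observing $n=0$ is then forced). Keeping track of which boundary circle $a_i$ versus $a_i'$ lies in $t$, as fixed in the notation preceding the proposition, is the delicate part.
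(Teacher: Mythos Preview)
You have misread the hypothesis: in this proposition $\bar t$ has \emph{non-empty boundary}, so $t$ is a compact surface with boundary and $H_2(t)=0$. Consequently $H_2(X_t)=H_2(t)=0$ as well, and all of your concerns about $H_2(M_t)\cong\mathbb Z$, orientability of a closed $t$, running the argument with $\mathbb Z/2$ coefficients, and the possibility ``$n=0$ is then forced'' are misplaced. The closed case is handled separately in Lemma~\ref{casewhentisclosed} and is explicitly excluded here.

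Once you use $H_2(X_t)=0$, the entire first two paragraphs of your proposal become unnecessary. The paper's proof is exactly your final paragraph, done cleanly: the long exact sequence of the pair $(X_t,\partial_0 X_t)$ collapses to
\[
0\to H_2(X_t,\partial_0 X_t)\to H_1(\partial_0 X_t)\to H_1(X_t)\to 0,
\]
since $H_2(X_t)=0$ and $H_1(t)\to H_1(X_t)$ is onto. Then one reads off that the kernel of $H_1(\partial_0 X_t)\to H_1(X_t)$ is freely generated by the classes $\partial[A_i]=[a_i']-[a_i]$, using Proposition~\ref{prop0} to identify the non-$t$, non-contractible components of $\partial_0 X_t$ as surfaces with infinite cyclic $H_1$ (not literally circles, as you wrote). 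Your Mayer--Vietoris analysis of the pieces $P_i$, $P_\Theta$ and the comparison with $H_2(M_t,\partial M_t)$ is in fact the content of the \emph{next} proposition (Proposition~\ref{prop2}), not this one; mixing the two has led you to a longer and partly incorrect route to a result whose proof is three lines once the hypothesis is read correctly.
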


\begin{proof}
Recall that $\partial_{0}X_{t}$ consists of $t$, various components containing
some $a_{i}^{\prime}$, and perhaps some other components. Proposition
\ref{prop0} tells us that these extra components are contractible, and that
any component of $\partial_{0}X_{t}$ which contains some $a_{i}^{\prime}$ has
infinite cyclic fundamental group. The long exact homology sequence of the
pair $(X_{t},\partial_{0}X_{t})$ implies that
\[
0\rightarrow H_{2}(X_{t},\partial_{0}X_{t})\rightarrow H_{1}(\partial_{0}%
X_{t})\rightarrow H_{1}(X_{t})\rightarrow0
\]
is exact, as $H_{2}(X_{t})=H_{2}(t)=0$, and $H_{1}(\partial_{0}X_{t})\cong
H_{1}(t)\oplus\sum_{i=1}^{n}H_{1}(a_{i}^{\prime})$, and $H_{1}(t)$ maps
isomorphically onto $H_{1}(X_{t})$. Since $a_{i}$ and $a_{i}^{\prime}$ map to
the same elements in $H_{1}(X_{t})$, the proposition follows.
\end{proof}

Next we prove the following.

\begin{proposition}
\label{prop2} Using the above notation, let $\overline{t}$ be a component of
$\partial_{0}X$ with non-empty boundary, such that $\pi_{1}(\bar{t})$ has
infinite index in $\pi_{1}(X)$, and suppose that no component of $\partial
_{0}X$ is an annulus. Then $H_{2}(X_{t},\partial_{0}X_{t})\rightarrow
H_{2}(M_{t},\partial M_{t})$ is an isomorphism and both are freely generated
by $[A_{1}],\dots,[A_{n}]$.
\end{proposition}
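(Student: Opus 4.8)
The plan is to compare the long exact homology sequences of the pairs $(X_t,\partial_0 X_t)$ and $(M_t,\partial M_t)$, exploiting the decomposition of $M_t$ into $X_t$ and the pieces $P_i$ and $P_\Theta$ established above. The key structural input is that $X_t$ meets each $P_i$ exactly in the annulus $A_i$, which is a deformation retract of $P_i$, and meets each $P_\Theta$ exactly in the contractible piece $\Theta$, with $P_\Theta$ contractible. Thus the only ``interesting'' intersections are the $A_i$, and gluing $X_t$ to the $P_i$ along the $A_i$ (and to the $P_\Theta$ along contractible pieces) should not change $H_2$ of the pair relative to the full boundary. First I would set up a Mayer--Vietoris argument for the decomposition of $(M_t,\partial M_t)$, or equivalently an excision argument: since $\partial M_t$ is the union of $\partial_0 X_t$, the $\partial_0 P_i$, and the $\partial_0 P_\Theta$, and since $\partial_1 X_t = (\bigcup A_i)\cup(\bigcup\Theta)$ is where the pieces are glued, I would aim to show $H_*(M_t,\partial M_t)$ is computed from the pieces $H_*(X_t,\partial_0 X_t)$, $H_*(P_i,\partial_0 P_i)$, $H_*(P_\Theta,\partial_0 P_\Theta)$, glued along the $H_*(A_i,\partial A_i\cap\partial_0 X_t)$ and along contractible relative pieces.

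Next I would compute the contribution of each piece. For $P_\Theta$: since $P_\Theta$ is contractible and $\Theta$ is contractible, the relative homology $H_*(P_\Theta,\partial_0 P_\Theta)$ and the relevant gluing terms vanish in degrees $\geq 1$ after accounting for the contractibility, so these pieces contribute nothing. For $P_i$: the inclusion $A_i\hookrightarrow P_i$ is a homotopy equivalence, and $A_i$ meets $t$ (hence $\partial_0 X_t$) in a single boundary component $a_i$, with the other boundary component $a_i'$ lying in $\partial_0 P_i$ (and also in $\partial_0 X_t$, since $a_i'$ is in a component of $\partial_0 X_t$). I would need to check that $P_i$ collapses onto $A_i$ rel the appropriate boundary so that $(P_i,\partial_0 P_i)$ contributes nothing new to $H_2$ either — essentially $P_i$ is a ``product region'' whose relative second homology is already accounted for by the class $[A_i]$. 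The upshot should be that $H_2(M_t,\partial M_t)$ receives a surjection from $H_2(X_t,\partial_0 X_t)$ carrying $[A_i]\mapsto[A_i]$, and that the $[A_i]$ remain independent in $H_2(M_t,\partial M_t)$ because, for instance, $A_i$ has boundary curve $a_i'$ in a distinct component of $\partial M_t$ and the intersection/pairing with a suitable relative $1$-cycle (an arc in $P_i$ from $a_i'$ back to itself, or the core of $P_i$) detects $[A_i]$. Combined with Proposition~\ref{prop1}, which gives $H_2(X_t,\partial_0 X_t)\cong\mathbb{Z}^n$ on the $[A_i]$, this yields the isomorphism.

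The main obstacle I expect is handling the gluing bookkeeping cleanly — in particular, making the excision/Mayer--Vietoris step rigorous when infinitely many pieces $P_i$ and $P_\Theta$ may be present (the cover $M_t$ is noncompact, so there can be infinitely many of each), and keeping careful track of which boundary curves $a_i$, $a_i'$ lie in $\partial_0 X_t$ versus $\partial_0 P_i$. Since a single annulus $A$ of $\partial_1 X$ may lift to one or two annuli $A_i$ in $X_t$, I must be sure the indexing is consistent with Proposition~\ref{prop1}. An alternative, possibly cleaner route that avoids infinite Mayer--Vietoris: use the fact that the inclusion $X_t\hookrightarrow M_t$ is a homotopy equivalence and that $M_t$ is obtained from $X_t$ by attaching the $P_i$ (homotopy equivalent to $A_i$) and contractible $P_\Theta$; then analyze directly the map of long exact sequences of $(X_t,\partial_0 X_t)\to(M_t,\partial M_t)$, using $H_2(X_t)\cong H_2(M_t)$ (both are $H_2(t)=0$ in the closed case, or computed from the retraction) and identifying the change in $H_1$ of the boundary as exactly the contribution of the extra boundary components $a_i'$ becoming identified across the gluing. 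I would present whichever of these is shorter, but I anticipate the honest work is in verifying that attaching the $P_i$ does not enlarge $H_2$ of the pair beyond the classes $[A_i]$ already present.
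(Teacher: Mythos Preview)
Your proposal is essentially the paper's own argument: a Mayer--Vietoris sequence for the pair $(M_t,\partial M_t)$ decomposed as $(X_t,\partial_0 X_t)$ glued to the pieces $(P_i,\partial_0 P_i)$ and $(P_\Theta,\partial_0 P_\Theta)$ along $(A_i,\partial A_i)$ and $(\Theta,\partial\Theta)$. Two small corrections to your intuition that the paper's execution makes precise: first, $(P_i,\partial_0 P_i)$ does \emph{not} contribute zero in degree two --- one has $H_2(P_i,\partial_0 P_i)\cong H_2(A_i,\partial A_i)\cong\mathbb{Z}$, and it is exactly this isomorphism (coming from $A_i\hookrightarrow P_i$ being a homotopy equivalence and $H_1(\partial A_i)\to H_1(\partial_0 P_i)$ being an isomorphism) that makes the $P_i$ contribution cancel against the gluing term in Mayer--Vietoris; second, to get a \emph{short} exact sequence terminating at $H_2(M_t,\partial M_t)$ you need the boundary map out of it to vanish, and this follows from the injectivity of $H_1(A_i,\partial A_i)\to H_1(P_i,\partial_0 P_i)$ and $H_1(\Theta,\partial\Theta)\to H_1(P_\Theta,\partial_0 P_\Theta)$, which in turn rests on knowing (via the argument of Proposition~\ref{prop0}) that the extra components of $\partial_0 P_i$ and all components of $\partial_0 P_\Theta$ are contractible. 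Your concern about infinitely many pieces is harmless since all but the finitely many $(P_i,\partial_0 P_i)$ have vanishing $H_2$.
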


\begin{proof}
Similar arguments to those in Proposition \ref{prop0} tell us that the
components of $\partial_{0}P_{i}$ which do not contain $a_{i}$ or
$a_{i}^{\prime}$ are contractible, and the other components of $\partial
_{0}P_{i}$ have infinite cyclic fundamental group. Also, for each contractible
component $\Theta$ of $\partial_{1}X_{t}$, all components of $\partial
_{0}P_{\Theta}$ are contractible. It follows that the inclusion of $A_{i}$
into $P_{i}$ induces an isomorphism from $H_{1}(\partial A_{i})$ to
$H_{1}(\partial_{0}P_{i})$ and an injection from $H_{0}(\partial A_{i})$ to
$H_{0}(\partial_{0}P_{i})$, and similar statements hold when $\Theta$ is the
universal cover of an annulus. If $\Theta$ is the universal cover of a torus,
we note that $H_{1}(\partial_{0}P_{\Theta})$ is zero. As the inclusion of
$A_{i}$ into $P_{i}$ is a homotopy equivalence, it follows from the long exact
homology sequences of the pairs $(A_{i},\partial A_{i})$ and $(P_{i}%
,\partial_{0}P_{i})$ that $H_{2}(A_{i},\partial A_{i})\rightarrow H_{2}%
(P_{i},\partial_{0}P_{i})$ is an isomorphism and $H_{1}(A_{i},\partial
A_{i})\rightarrow H_{1}(P_{i},\partial_{0}P_{i})$ is an injection. If $\Theta$
is the universal cover of an annulus or torus, then $H_{2}(\Theta
,\partial\Theta)=H_{2}(P_{\Theta},\partial_{0}P_{\Theta})=0$, and
$H_{1}(\Theta,\partial\Theta)\rightarrow H_{1}(P_{\Theta},\partial
_{0}P_{\Theta})$ is an injection. Now we consider the Mayer-Vietoris sequence
for the pair $(M_{t},\partial M_{t})$ expressed as the union of $(X_{t}%
,\partial_{0}X_{t})$ and $(\cup P_{i},\cup\partial_{0}P_{i})\cup(\cup
P_{\Theta},\cup\partial_{0}P_{\Theta})$. We obtain the short exact sequence%

\begin{gather*}
0\rightarrow%
{\displaystyle\sum\limits_{i=1}^{n}}
H_{2}(A_{i},\partial A_{i})\oplus%
{\displaystyle\sum\limits_{\Theta}}
H_{2}(\Theta,\partial\Theta)\rightarrow\\%
{\displaystyle\sum\limits_{i=1}^{n}}
H_{2}(P_{i},\partial_{0}P_{i})\oplus%
{\displaystyle\sum\limits_{\Theta}}
H_{2}(P_{\Theta},\partial_{0}P_{\Theta})\oplus H_{2}(X_{t},\partial_{0}%
X_{t})\rightarrow H_{2}(M_{t},\partial M_{t})\rightarrow0,
\end{gather*}

where the first term is $H_{3}(M_{t},\partial M_{t})$ which is zero as $M_{t}$
is not compact. The final term being zero reflects the fact that the boundary
map from $H_{2}(M_{t},\partial M_{t})$ is zero, as the next map in the
Mayer-Vietoris sequence is injective.

As $H_{2}(\Theta,\partial\Theta)=H_{2}(P_{\Theta},\partial_{0}P_{\Theta})=0$,
and $H_{2}(A_{i},\partial A_{i})\rightarrow H_{2}(P_{i},\partial_{0}P_{i})$ is
an isomorphism, it follows that $H_{2}(X_{t},\partial_{0}X_{t})\rightarrow
H_{2}(M_{t},\partial M_{t})$ is an isomorphism, as required.
\end{proof}

Next we want to apply the same arguments to $N_{t}$, $Y_{t}$ and the
components of the closure of $N_{t}-Y_{t}$ to obtain the analogous isomorphism
but without the information about the generators. For $Y_{t}$ we have only the following.

\begin{proposition}
\label{prop3} Using the above notation, let $\overline{t}$ be a component of
$\partial_{0}X$ with non-empty boundary, such that $\pi_{1}(\bar{t})$ has
infinite index in $\pi_{1}(X)$, and suppose that no component of $\partial
_{0}X$ is an annulus. Then $H_{2}(Y_{t},\partial_{0}Y_{t})\rightarrow
H_{2}(N_{t},\partial N_{t})$ is an isomorphism.
\end{proposition}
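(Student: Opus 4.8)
The plan is to repeat the arguments of Propositions~\ref{prop0} and~\ref{prop2}, but now on the $N$--side, with $Y_{t}\subset N_{t}$ and the components of the closure of $N_{t}-Y_{t}$ playing the roles of $X_{t}\subset M_{t}$ and the pieces $P_{i},P_{\Theta}$. The one ingredient that was automatic on the $M$--side, because $M_{t}$ was built from $\pi_{1}(\bar t)$, but which now requires an argument, is that the inclusion $Y_{t}\hookrightarrow N_{t}$ is a homotopy equivalence. Granting that, the rest of the bookkeeping transfers; but it yields only the isomorphism $H_{2}(Y_{t},\partial_{0}Y_{t})\rightarrow H_{2}(N_{t},\partial N_{t})$ and no description of generators, since we have no analogue of Proposition~\ref{prop1} here (there is no subsurface of $Y_{t}$ canonically playing the role of $t$).

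I would first show that $Y_{t}\hookrightarrow N_{t}$ is a homotopy equivalence. The map $f_{t}\colon M_{t}\rightarrow N_{t}$ is a lift of $f$ between covers corresponding (under the given isomorphism) to $\pi_{1}(\bar t)$, hence a proper homotopy equivalence; and $X_{t}\hookrightarrow M_{t}$ is a homotopy equivalence since $\pi_{1}(X_{t})=\pi_{1}(t)=\pi_{1}(M_{t})$. As $f_{t}(X_{t})\subset Y_{t}$, the homotopy equivalence $X_{t}\hookrightarrow M_{t}\xrightarrow{f_{t}}N_{t}$ factors through $Y_{t}$, so $\pi_{1}(Y_{t})\rightarrow\pi_{1}(N_{t})$ is onto. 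Since $Y_{t}$ is a component of the pre-image of the vertex space $Y$, and vertex groups of $\Gamma_{H}$ inject into $H$, the group $\pi_{1}(Y_{t})$ is the intersection of $\pi_{1}(N_{t})$ with a conjugate of $\pi_{1}(Y)$, so $\pi_{1}(Y_{t})\rightarrow\pi_{1}(N_{t})$ is also injective. Hence it is an isomorphism, and as both spaces are aspherical, $Y_{t}\hookrightarrow N_{t}$ is a homotopy equivalence (and $f_{t}$ restricts to a homotopy equivalence $X_{t}\rightarrow Y_{t}$).

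Next I would analyse the complementary pieces. Since $f$ is a homeomorphism on the edge spaces, $f_{t}$ carries $\partial_{1}X_{t}$ homeomorphically onto $\partial_{1}Y_{t}$, so Proposition~\ref{prop0} tells us that the annulus components of $\partial_{1}Y_{t}$ are precisely $B_{1},\dots,B_{n}$ and all other components are contractible. Because $Y_{t}\hookrightarrow N_{t}$ is a homotopy equivalence, the deformation-retract argument used for $M_{t}$ shows that $N_{t}$ is the union of $Y_{t}$ with pieces $Q_{1},\dots,Q_{n}$ and pieces $Q_{\Theta'}$, where $Q_{i}$ meets $Y_{t}$ exactly in $B_{i}$ with $B_{i}\hookrightarrow Q_{i}$ a homotopy equivalence (so $\pi_{1}(Q_{i})\cong\mathbb{Z}$), and each $Q_{\Theta'}$ meets $Y_{t}$ in a single contractible surface $\Theta'$ and is contractible. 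Then, exactly as in the discussion preceding Proposition~\ref{prop0} and in the proof of Proposition~\ref{prop2}, one uses the fact that every essential annulus in the $PD3$ pair $(H,\partial H)$ is enclosed by a $V_{0}$--vertex of $\Gamma_{H}$ to conclude: a non-contractible component of $\partial_{0}Q_{i}$ contains one of the two boundary circles of $B_{i}$ and has infinite cyclic fundamental group, all other components of $\partial_{0}Q_{i}$ are contractible; for a torus-type $\Theta'$ one has $H_{1}(\partial_{0}Q_{\Theta'})=0$, and for an annulus-type $\Theta'$ all components of $\partial_{0}Q_{\Theta'}$ are contractible.

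Finally I would run the Mayer--Vietoris sequence for $(N_{t},\partial N_{t})$ written as the union of $(Y_{t},\partial_{0}Y_{t})$ with $(\bigcup Q_{i},\bigcup\partial_{0}Q_{i})\cup(\bigcup Q_{\Theta'},\bigcup\partial_{0}Q_{\Theta'})$. As $f_{t}$ is proper and $M_{t}$ is non-compact, $N_{t}$ is non-compact, so $H_{3}(N_{t},\partial N_{t})=0$; and, exactly as for $M_{t}$, the sequence reduces to the short exact sequence
\[
0\rightarrow\bigoplus_{i}H_{2}(B_{i},\partial B_{i})\oplus\bigoplus_{\Theta'}H_{2}(\Theta',\partial\Theta')\rightarrow\bigoplus_{i}H_{2}(Q_{i},\partial_{0}Q_{i})\oplus\bigoplus_{\Theta'}H_{2}(Q_{\Theta'},\partial_{0}Q_{\Theta'})\oplus H_{2}(Y_{t},\partial_{0}Y_{t})\rightarrow H_{2}(N_{t},\partial N_{t})\rightarrow 0 .
\]
Here $H_{2}(\Theta',\partial\Theta')=0$, and $H_{2}(Q_{\Theta'},\partial_{0}Q_{\Theta'})=0$ because $Q_{\Theta'}$ is contractible and $H_{1}(\partial_{0}Q_{\Theta'})=0$; and from the long exact sequences of $(B_{i},\partial B_{i})$ and $(Q_{i},\partial_{0}Q_{i})$, together with the structure of $\partial_{0}Q_{i}$, the map $H_{2}(B_{i},\partial B_{i})\rightarrow H_{2}(Q_{i},\partial_{0}Q_{i})$ is an isomorphism. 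Projecting the middle term onto its $H_{2}(Y_{t},\partial_{0}Y_{t})$ summand then identifies the cokernel of the first map with $H_{2}(Y_{t},\partial_{0}Y_{t})$, so $H_{2}(Y_{t},\partial_{0}Y_{t})\rightarrow H_{2}(N_{t},\partial N_{t})$ is an isomorphism, as required. I expect the main obstacle to be the first step, showing $Y_{t}\hookrightarrow N_{t}$ is a homotopy equivalence: unlike on the $M$--side this is not built into the construction and genuinely uses both that $f_{t}$ is \emph{proper} and that vertex groups of $\Gamma_{H}$ inject into $H$; a secondary difficulty is carrying out the essential-annulus analysis of $\partial_{0}Q_{i}$ without a reference surface in $Y_{t}$ analogous to $t$, though this goes through because the enclosure principle for essential annuli holds for $(H,\partial H)$ just as for $(G,\partial G)$.
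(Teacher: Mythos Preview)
Your proposal is correct and follows essentially the same approach as the paper: transfer the structure of $\partial_{1}X_{t}$ to $\partial_{1}Y_{t}$, decompose $N_{t}$ into $Y_{t}$ together with pieces $Q_{i}$ and $Q_{\Theta}$, analyse $\partial_{0}Q_{i}$ and $\partial_{0}Q_{\Theta}$ via the enclosing property for $(H,\partial H)$, and run the same Mayer--Vietoris computation. The only difference is emphasis: you explicitly verify that $Y_{t}\hookrightarrow N_{t}$ is a homotopy equivalence before proceeding, whereas the paper appeals directly to the graph of groups isomorphism $\Gamma_{G}\cong\Gamma_{H}$ to transport the properties of the $P_{i}$ and $P_{\Theta}$ to the $Q_{i}$ and $Q_{\Theta}$ (which amounts to the same thing, since the graph of groups isomorphism is precisely what guarantees that $\pi_{1}(N_{t})$ fixes the vertex $w$ of the Bass--Serre tree, hence $\pi_{1}(Y_{t})=\pi_{1}(N_{t})$).
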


\begin{proof}
Proposition \ref{prop0} shows that the components of $\partial_{1}X_{t}$
consist of the annuli $A_{1},\ldots A_{n}$ together with contractible
components. As we have a graph of groups isomorphism between $\Gamma_{G}$ and
$\Gamma_{H}$, it follows that the components of $\partial_{1}Y_{t}$ consist of
the annuli $B_{1},\ldots B_{n}$ together with contractible components. Each
annulus $B_{i}$ lies in a component, say $Q_{i}$, of the closure of
$N_{t}-Y_{t}$, and $Y_{t}$ meets $Q_{i}$ only in $B_{i}$. If $\Theta$ is one
of these contractible components of $\partial_{1}Y_{t}$, then $\Theta$ lies in
a component, say $Q_{\Theta}$, of the closure of $N_{t}-Y_{t}$, such that
$Y_{t}$ meets $Q_{\Theta}$ only in $\Theta$, and $Q_{\Theta}$ is contractible.
We have that $N_{t}$ is the union of $Y_{t}$, the $Q_{i}$'s, and the
$Q_{\Theta}$'s, and that $\partial N_{t}$ is the union of $\partial_{0}Y_{t}$,
the $\partial_{0}Q_{i}$'s, and the $\partial_{0}Q_{\Theta}$'s. Further, as in
the proof of Proposition \ref{prop2}, the components of $\partial_{0}Q_{i}$
which do not contain $b_{i}$ or $b_{i}^{\prime}$ are contractible, and the
other components of $\partial_{0}Q_{i}$ have infinite cyclic fundamental
group. Also, for each contractible component $\Theta$ of $\partial_{1}Y_{t}$,
all components of $\partial_{0}Q_{\Theta}$ are contractible. As the inclusion
of $B_{i}$ into $Q_{i}$ is a homotopy equivalence, it follows from the long
exact homology sequences of the pairs $(B_{i},\partial B_{i})$ and
$(Q_{i},\partial_{0}Q_{i})$ that $H_{2}(B_{i},\partial B_{i})\rightarrow
H_{2}(Q_{i},\partial_{0}Q_{i})$ is an isomorphism and $H_{1}(B_{i},\partial
B_{i})\rightarrow H_{1}(Q_{i},\partial_{0}Q_{i})$ is an injection. If $\Theta$
is the universal cover of an annulus or torus, then $H_{2}(\Theta
,\partial\Theta)=H_{2}(Q_{\Theta},\partial_{0}Q_{\Theta})=0$, and
$H_{1}(\Theta,\partial\Theta)\rightarrow H_{1}(Q_{\Theta},\partial
_{0}Q_{\Theta})$ is an injection. Now as in the proof of Proposition
\ref{prop2}, we apply the Mayer-Vietoris sequence for the pair $(N_{t}%
,\partial N_{t})$ expressed as the union of $(Y_{t},\partial_{0}Y_{t})$ and
$(\cup Q_{i},\cup\partial_{0}Q_{i})\cup(\cup Q_{\Theta},\cup\partial
_{0}Q_{\Theta})$, to deduce the result. But note that although this argument
is very similar to the proof of Proposition \ref{prop2}, the groups involved
may be very different, as $\partial M_{t}$ and $\partial N_{t}$ may be different.
\end{proof}

However duality implies that $H_{2}(M_{t},\partial M_{t})\cong H_{c}^{1}%
(M_{t})\cong H_{c}^{1}(N_{t})\cong H_{2}(N_{t},\partial N_{t})$. Thus

\begin{proposition}
\label{prop4} All the groups in Propositions \ref{prop2} and \ref{prop3} are
free abelian of rank $n$.
\end{proposition}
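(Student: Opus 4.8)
The plan is to note that the rank-$n$ statement is already in hand for two of the four groups in question, and then to propagate it to the other two by duality. Proposition \ref{prop2} asserts that $H_2(X_t,\partial_0 X_t)$ and $H_2(M_t,\partial M_t)$ are each free abelian of rank $n$, being freely generated by $[A_1],\dots,[A_n]$, so there is nothing to do for these. Moreover Proposition \ref{prop3} gives an isomorphism $H_2(Y_t,\partial_0 Y_t)\to H_2(N_t,\partial N_t)$, so it suffices to show that $H_2(N_t,\partial N_t)$ is free abelian of rank $n$, and then the group $H_2(Y_t,\partial_0 Y_t)$ inherits the same conclusion.

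To handle $H_2(N_t,\partial N_t)$ I would run the chain of isomorphisms indicated just before the statement. Since $(M,\partial M)$ represents a $PD3$ pair, Poincar\'{e}--Lefschetz duality applied to the (non-compact) cover $M_t$ gives $H_2(M_t,\partial M_t)\cong H_c^{1}(M_t)$. Because $f_t\colon M_t\to N_t$ is a proper homotopy equivalence, it induces an isomorphism $H_c^{1}(N_t)\cong H_c^{1}(M_t)$, cohomology with compact supports being a proper homotopy invariant. Applying duality again, now to $N_t$, yields $H_c^{1}(N_t)\cong H_2(N_t,\partial N_t)$. Composing these, $H_2(N_t,\partial N_t)\cong H_2(M_t,\partial M_t)\cong\mathbb{Z}^{n}$ by Proposition \ref{prop2}, and then Proposition \ref{prop3} gives $H_2(Y_t,\partial_0 Y_t)\cong\mathbb{Z}^{n}$ as well. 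This establishes the proposition.

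The only point requiring care is the use of duality for the infinite-index covers $M_t$ and $N_t$: these are non-compact, so the relevant statement is the version of Poincar\'{e} duality interchanging ordinary relative homology in degree $k$ with compactly supported cohomology in degree $3-k$, which is precisely the form valid for covers of a $PD3$ pair. Granting this, everything else is formal: the degree count $3-2=1$, the proper homotopy invariance of $H_c^{*}$ under $f_t$, and the bookkeeping with Propositions \ref{prop2} and \ref{prop3}. I do not anticipate a genuine obstacle here; the substantive work was already carried out in Propositions \ref{prop0}, \ref{prop1}, \ref{prop2} and \ref{prop3}, and Proposition \ref{prop4} is essentially the assembly step.
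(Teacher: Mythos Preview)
Your proposal is correct and follows essentially the same route as the paper: the paper's proof is the one-line chain $H_{2}(M_{t},\partial M_{t})\cong H_{c}^{1}(M_{t})\cong H_{c}^{1}(N_{t})\cong H_{2}(N_{t},\partial N_{t})$ via duality and the proper homotopy equivalence $f_t$, combined with Propositions~\ref{prop2} and~\ref{prop3}, exactly as you outline. Your added remark about the form of duality needed for the non-compact covers is a reasonable clarification but is not elaborated in the paper.
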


In the $3$--manifold setting, the map $H_{2}(Y_{t},\partial_{0}Y_{t}%
)\rightarrow H_{2}(Y_{t},\partial Y_{t})$ is zero, and later we will be able
to show this holds in the present setting. But we will begin by proving
something weaker in Proposition \ref{prop7} below.

We write $A$ for the union of the $A_{i}$'s, and $B$ for the union of the
$B_{i}$'s. Note that we have homotopy equivalences of pairs $(M_{t}%
,A)\rightarrow(N_{t},B)$ and $(M_{t},\partial A)\rightarrow(N_{t},\partial B)$.

\begin{proposition}
\label{prop5} Using the above notation, let $\overline{t}$ be a component of
$\partial_{0}X$ with non-empty boundary, such that $\pi_{1}(\bar{t})$ has
infinite index in $\pi_{1}(X)$, and suppose that no component of $\partial
_{0}X$ is an annulus. Then $H_{2}(M_{t},\partial A)$ is free abelian of rank
$n+1$, generated by $[t]$ and $[A_{1}],\dots,[A_{n}]$.
\end{proposition}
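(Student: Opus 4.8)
The plan is to compute $H_2(M_t,\partial A)$ by relating the pair $(M_t,\partial A)$ to the pair $(M_t,\partial M_t)$ together with the pieces $\partial_0 X_t$ and the contractible components of $\partial M_t$. Recall from the discussion preceding Proposition \ref{prop1} that $\partial M_t$ is the union of $\partial_0 X_t$, the $\partial_0 P_i$'s, and the $\partial_0 P_\Theta$'s, and that by Proposition \ref{prop0} the components of $\partial_0 X_t$ are: $t$ (which carries $X_t$, and hence $M_t$), finitely many annuli each containing one $a_i'$, and contractible components; while each $\partial_0 P_i$ consists of components containing $a_i$ or $a_i'$ with infinite cyclic $\pi_1$, together with contractible components, and each $\partial_0 P_\Theta$ is a union of contractible pieces (or, if $\Theta$ covers a torus, has vanishing $H_1$). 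The key observation is that $\partial A = \bigcup \partial A_i$ is a disjoint union of $2n$ circles, and $\partial M_t$ deformation retracts (up to homology) onto $t \cup \partial A$ after collapsing all the contractible components and using that each annulus component of $\partial_0 X_t$ containing some $a_i'$ retracts to $a_i'$, and each infinite-cyclic component of $\partial_0 P_i$ retracts to a circle carrying the same $H_1$ as $a_i$ or $a_i'$.

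Concretely, first I would establish that the inclusion $\partial A \hookrightarrow \partial M_t$ induces on homology exactly: $H_0(\partial A) = \mathbb{Z}^{2n} \to H_0(\partial M_t)$ hitting the classes of those boundary components (with $t$ contributing one more $H_0$ class, the rest contractible or identified), and $H_1(\partial A) = \mathbb{Z}^{2n} \to H_1(\partial M_t) = H_1(t) \oplus (\text{free abelian from the } a_i, a_i')$, where the map sends $[\partial A_i^{\text{near }t}] = [a_i] \mapsto [t]$-component images and $[a_i'] \mapsto$ the corresponding infinite-cyclic generator — all of this is already implicit in the homology computations of Propositions \ref{prop1} and \ref{prop2}. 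Then I would run the homology long exact sequence of the triple $(M_t, \partial M_t, \partial A)$, or equivalently combine the long exact sequences of $(M_t,\partial A)$, $(M_t,\partial M_t)$, and $(\partial M_t, \partial A)$. Since $M_t$ is non-compact we have $H_3(M_t,\partial M_t)=0$ and (by Proposition \ref{prop4} and duality) $H_2(M_t,\partial M_t) \cong \mathbb{Z}^n$ freely generated by the $[A_i]$; and $H_2(M_t)=H_2(t)=0$, $H_1(M_t)\cong H_1(t)$.

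The heart of the computation is the relative homology $H_*(\partial M_t, \partial A)$. Excising the contractible components and the pieces that retract onto the $a_i, a_i'$, one finds $H_2(\partial M_t,\partial A)$ is free abelian generated by the fundamental classes of the annulus components of $\partial_0 X_t$ containing the $a_i'$ (each an annulus with one boundary circle in $\partial A$ and one not — wait, both its boundary circles: one is $a_i'$, the other lies in $\partial_0 P_i$, so in fact only part of $\partial A$), so $H_2(\partial M_t, \partial A)$ contributes a $\mathbb{Z}$ for the "annulus rel one boundary circle" from $t$'s side plus matching from the $P_i$ side — this bookkeeping is the main obstacle and must be done carefully. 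Threading this through the long exact sequence, the class $[t] \in H_2(M_t,\partial A)$ appears as the image of $[\partial M_t\text{-piece}]$ under the connecting map, independent of the $[A_i]$; and one gets a short exact sequence $0 \to H_2(M_t,\partial M_t) \to H_2(M_t,\partial A) \to (\text{image in } H_2(\partial M_t,\partial A)) \to 0$ with the quotient $\cong \mathbb{Z}$ generated by (the class of) $[t]$, splitting since everything is free abelian. This yields $H_2(M_t,\partial A) \cong \mathbb{Z}^{n+1}$ with the stated generators $[t], [A_1],\dots,[A_n]$. The step I expect to be the genuine obstacle is the precise identification of $H_*(\partial M_t,\partial A)$ and verifying that the connecting homomorphism $H_2(M_t,\partial M_t) \to H_1(\partial M_t,\partial A)$ vanishes on the $[A_i]$ (so that the $[A_i]$ survive to $H_2(M_t,\partial A)$) while producing exactly one new free generator represented by $[t]$; once the boundary-component bookkeeping from Proposition \ref{prop0} is organized, the rest is diagram-chasing over free abelian groups.
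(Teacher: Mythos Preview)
Your approach is the same as the paper's --- the long exact sequence of the triple $(M_t,\partial M_t,\partial A)$ --- but your execution has the maps running the wrong way, and your computation of the relative term is more tangled than it needs to be.

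For the triple $\partial A\subset\partial M_t\subset M_t$ the sequence reads
\[
\cdots\to H_2(\partial M_t,\partial A)\to H_2(M_t,\partial A)\to H_2(M_t,\partial M_t)\xrightarrow{\ \partial\ } H_1(\partial M_t,\partial A)\to\cdots,
\]
so once $H_3(M_t,\partial M_t)=0$ and the connecting map $\partial$ is shown to vanish, the short exact sequence you obtain is
\[
0\to H_2(\partial M_t,\partial A)\to H_2(M_t,\partial A)\to H_2(M_t,\partial M_t)\to 0,
\]
not the one you wrote with $H_2(M_t,\partial M_t)$ as the subgroup. Thus $[t]$ arises not from any connecting map but simply as the image of the fundamental class under the inclusion-induced map $H_2(\partial M_t,\partial A)\to H_2(M_t,\partial A)$, while the $[A_i]$ are lifts of the generators of the quotient $H_2(M_t,\partial M_t)$.

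The vanishing of $\partial$ is immediate once you invoke Proposition~\ref{prop2}: that proposition says the $[A_i]$ \emph{generate} $H_2(M_t,\partial M_t)$, and each $[A_i]$ is sent to the class of $\partial A_i$ in $H_1(\partial M_t,\partial A)$, which is zero since $\partial A_i\subset\partial A$. You flagged this as the ``genuine obstacle'', but it is the easy step.

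For the term $H_2(\partial M_t,\partial A)$, your discussion of annulus components, retractions, and ``both its boundary circles'' wanders. The clean argument is the one in the paper: the pieces of $\partial M_t$ cut along $\partial A$ are precisely the components of $\partial_0X_t$, $\partial_0P_i$, and $\partial_0P_\Theta$, and by the structural analysis already done (Propositions~\ref{prop0}--\ref{prop2}) every such piece other than $t$ is either contractible or has infinite cyclic fundamental group and meets $\partial A$ in a single circle. In either case the piece contributes nothing to $H_2(\partial M_t,\partial A)$; only $(t,\partial t)$ contributes, giving $H_2(\partial M_t,\partial A)\cong\mathbb{Z}$ generated by $[t]$. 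With that in hand the short exact sequence (in the correct direction) splits over free abelian groups and yields the statement.
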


\begin{proof}
Consider the long exact homology sequence of the triple $(M_{t},\partial
M_{t},\partial A)$. As $M_{t}$ is not compact, we have $H_{3}(M_{t},\partial
M_{t})=0$. Also Proposition \ref{prop2} implies that the boundary map
$H_{2}(M_{t},\partial M_{t})\rightarrow H_{1}(\partial M_{t},\partial A)$ is
zero. Thus we obtain the short exact sequence%
\[
0\rightarrow H_{2}(\partial M_{t},\partial A)\rightarrow H_{2}(M_{t},\partial
A)\rightarrow H_{2}(M_{t},\partial M_{t})\rightarrow0.
\]

Recall that $\partial M_{t}$ is the union of $\partial_{0}X_{t}$,
$\cup\partial_{0}P_{i}$ and $\cup\partial_{0}P_{\Theta}$. Further the proofs
of Propositions \ref{prop1} and \ref{prop2} show that each component of
$\partial_{0}X_{t}$, $\cup\partial_{0}P_{i}$ and $\cup\partial_{0}P_{\Theta}$
other than $t$, is either contractible and disjoint from $t$, or meets $t$ in
a single boundary component and has infinite cyclic fundamental group. It
follows that $H_{2}(\partial M_{t},\partial A)$ is infinite cyclic generated
by $[t]$. Now the above short exact sequence implies the result of the proposition.
\end{proof}

The homotopy equivalence $(M_{t},\partial A)\rightarrow(N_{t},\partial B)$
immediately implies the following result.

\begin{proposition}
\label{prop6} Using the above notation, let $\overline{t}$ be a component of
$\partial_{0}X$ with non-empty boundary, such that $\pi_{1}(\bar{t})$ has
infinite index in $\pi_{1}(X)$, and suppose that no component of $\partial
_{0}X$ is an annulus. Then $H_{2}(N_{t},\partial B)$ is free abelian of rank
$n+1$, generated by $[f(t)]$ and $[B_{1}],\dots,[B_{n}]$.
\end{proposition}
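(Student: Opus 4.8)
The plan is to deduce Proposition \ref{prop6} directly from Proposition \ref{prop5} by transporting it across the homotopy equivalence of pairs $(M_{t},\partial A)\rightarrow(N_{t},\partial B)$. Recall that this homotopy equivalence was constructed from $f_{t}$, which carries each $A_{i}$ homeomorphically to $B_{i}$ and hence carries $\partial A$ to $\partial B$; since $f_{t}$ is a homotopy equivalence $M_{t}\rightarrow N_{t}$ restricting to a homotopy equivalence $\partial A\rightarrow\partial B$, the induced map of pairs is a homotopy equivalence, and therefore $(f_{t})_{*}\colon H_{2}(M_{t},\partial A)\rightarrow H_{2}(N_{t},\partial B)$ is an isomorphism of abelian groups. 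By Proposition \ref{prop5}, $H_{2}(M_{t},\partial A)$ is free abelian of rank $n+1$ with basis $[t],[A_{1}],\dots,[A_{n}]$, so its image under $(f_{t})_{*}$ is free abelian of rank $n+1$ with basis the images of these classes.

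The second step is to identify these image classes. The class $[A_{i}]\in H_{2}(M_{t},\partial A)$ is carried to the fundamental class of the pair $(B_{i},\partial B_{i})$ pushed into $(N_{t},\partial B)$, since $f_{t}$ restricts to a homeomorphism $A_{i}\rightarrow B_{i}$; thus $(f_{t})_{*}[A_{i}]=[B_{i}]$ in the notation of the proposition. The class $[t]$ is the image of the fundamental class of the closed (as a manifold with boundary) surface pair, more precisely the relative fundamental class of $\overline{t}$ lifted to $t$; its image is by definition $[f(t)]$, the class represented by the composite $t\hookrightarrow M_{t}\xrightarrow{f_{t}} N_{t}$ together with the induced map on $\partial t$ (which lands in $\partial B$ because $f$ carries $\partial_{1}X$ to $\partial_{1}Y$ homeomorphically, and in particular the boundary curves of $t$ that lie on the $a_{i}$ go to the $b_{i}$). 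Hence the basis $\{[t],[A_{1}],\dots,[A_{n}]\}$ is carried to $\{[f(t)],[B_{1}],\dots,[B_{n}]\}$, which gives exactly the asserted description of $H_{2}(N_{t},\partial B)$.

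I expect essentially no obstacle here: this proposition is explicitly flagged in the excerpt as an immediate consequence of the homotopy equivalence $(M_{t},\partial A)\rightarrow(N_{t},\partial B)$. The only point requiring a sentence of care is to make sure the relevant maps genuinely form a homotopy equivalence \emph{of pairs}---that is, that $f_{t}$ can be taken to restrict to a homotopy equivalence $\partial A\rightarrow\partial B$, not merely a map---but this was already arranged when we chose $f_{t}$ to carry the annuli $A_{i}$ homeomorphically to the $B_{i}$, so $\partial A\rightarrow\partial B$ is a homeomorphism and in particular a homotopy equivalence. With that in hand the five-lemma (or simply naturality of the long exact sequence of a pair under the homotopy equivalence) finishes the argument, and the naming of generators is just bookkeeping about which curve maps where. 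So the proof is a two-line argument: apply $(f_{t})_{*}$ to the conclusion of Proposition \ref{prop5} and read off the images of the named generators.
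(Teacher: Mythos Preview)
Your proposal is correct and matches the paper's approach exactly: the paper states that Proposition~\ref{prop6} follows immediately from the homotopy equivalence of pairs $(M_{t},\partial A)\rightarrow(N_{t},\partial B)$ applied to Proposition~\ref{prop5}, and your write-up simply spells out this transport and the identification of generators.
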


Now we are able to show the following result.

\begin{proposition}
\label{prop7} Using the above notation, let $\overline{t}$ be a component of
$\partial_{0}X$ with non-empty boundary, such that $\pi_{1}(\bar{t})$ has
infinite index in $\pi_{1}(X)$, and suppose that no component of $\partial
_{0}X$ is an annulus. Then the map $H_{2}(Y_{t},\partial_{0}Y_{t})\rightarrow
H_{2}(Y_{t},\partial Y_{t})$ sends $[f(t)]$ to zero.
\end{proposition}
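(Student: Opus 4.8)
The plan is to exploit the commutative diagram of pairs relating $(Y_t,\partial_0 Y_t)$, $(Y_t,\partial Y_t)$, $(N_t,\partial B)$ and $(N_t,\partial N_t)$, and to trace the class $[f(t)]$ through it. First I would fix the natural maps: the inclusion of pairs $j:(Y_t,\partial_0 Y_t)\to(Y_t,\partial Y_t)$ which we want to understand on $[f(t)]$, and the inclusions $(Y_t,\partial_0 Y_t)\to(N_t,\partial_0 N_t)$, $(Y_t,\partial Y_t)\to(N_t,\partial N_t)$, together with the map into the mixed pair $(N_t,\partial B)$ that appears in Proposition~\ref{prop6}. The point of the mixed pair is that $\partial B\subset\partial Y_t$ (the $B_i$ are annulus components of $\partial_1 Y_t$), while $\partial B$ also sits inside $\partial N_t$ via $\partial N_t=\partial_0 Y_t\cup(\cup\partial_0 Q_i)\cup(\cup\partial_0 Q_\Theta)$; so $(N_t,\partial B)$ receives a map from $(Y_t,\partial_0 Y_t)$ and maps on to both $(N_t,\partial N_t)$ and, crucially, to $(Y_t,\partial Y_t)$ is \emph{not} quite right — rather the relevant factorisation is that the composite $(Y_t,\partial_0 Y_t)\to(Y_t,\partial Y_t)\to(N_t,\partial N_t)$ equals $(Y_t,\partial_0 Y_t)\to(N_t,\partial B)\to(N_t,\partial N_t)$, since both send a relative cycle of $Y_t$ rel $\partial_0 Y_t$ to the same cycle rel $\partial N_t$. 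So I would set up the square
\[
\begin{array}{ccc}
H_2(Y_t,\partial_0 Y_t) & \longrightarrow & H_2(Y_t,\partial Y_t)\\
\downarrow & & \downarrow\\
H_2(N_t,\partial B) & \longrightarrow & H_2(N_t,\partial N_t)
\end{array}
\]
which commutes, the left vertical being the obvious inclusion-induced map and the right vertical the map $H_2(Y_t,\partial Y_t)\to H_2(N_t,\partial N_t)$ induced by $Y_t\hookrightarrow N_t$.

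Next I would show the left vertical map sends $[f(t)]$ to $[f(t)]$ viewed in $H_2(N_t,\partial B)$, and then argue that this class lies in the kernel of the bottom map $H_2(N_t,\partial B)\to H_2(N_t,\partial N_t)$. This kernel is controlled by the exact sequence of the triple $(N_t,\partial N_t,\partial B)$, whose relevant portion reads
\[
H_2(\partial N_t,\partial B)\to H_2(N_t,\partial B)\to H_2(N_t,\partial N_t),
\]
so it suffices to show $[f(t)]$ comes from $H_2(\partial N_t,\partial B)$. By Proposition~\ref{prop6}, $H_2(N_t,\partial B)$ is free abelian of rank $n+1$ on $[f(t)],[B_1],\dots,[B_n]$; the classes $[B_i]$ map into $\partial N_t$ (each $B_i\subset\partial Y_t$ and $B_i=Q_i\cap Y_t$, and its boundary pushes into $\partial N_t$ — indeed $B_i$ itself is a relative cycle of $\partial_1 Y_t$ but is homologous in $N_t$ rel $\partial B$ to a chain in $\partial N_t$ because $Q_i$ deformation retracts to $B_i$ and $\partial_0 Q_i\subset\partial N_t$). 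More to the point, $[f(t)]$ maps to zero in $H_2(N_t,\partial N_t)$ precisely because $\overline t=Y\cap\partial M$ has image in $\partial M$, hence its preimage $f_t(t)$ can be pushed, via the retraction structure already used, onto $\partial N_t$; said homologically, $[f(t)]$ is in the image of $H_2(\partial N_t)\to H_2(N_t)$, so a fortiori of $H_2(\partial N_t,\partial B)\to H_2(N_t,\partial B)$. I would make this precise by noting $f_t$ restricted to $t$ factors up to homotopy through $\partial N_t$ (since $t=p_t^{-1}(\overline t)$-component maps into $\partial M_t$, and $f_t$ is a map of pairs $(M_t,\partial M_t)\to(N_t,\partial N_t)$), which immediately gives that $[f(t)]$, as an absolute $2$-cycle, is carried by $\partial N_t$.

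Then I would finish by a diagram chase: $[f(t)]\in H_2(Y_t,\partial_0 Y_t)$ maps down to a class in $H_2(N_t,\partial B)$ which dies in $H_2(N_t,\partial N_t)$; by commutativity of the square, the image of $[f(t)]$ in $H_2(Y_t,\partial Y_t)$ also dies in $H_2(N_t,\partial N_t)$. But by Proposition~\ref{prop3} (applied with roles as stated) together with Proposition~\ref{prop4}, the map $H_2(Y_t,\partial Y_t)\to H_2(N_t,\partial N_t)$ is — at least on the relevant summand — injective; more precisely $H_2(Y_t,\partial_0 Y_t)\to H_2(N_t,\partial N_t)$ is an isomorphism, and the target class of $[f(t)]$ in $H_2(Y_t,\partial Y_t)$ maps to the same element of $H_2(N_t,\partial N_t)$ as does $j([f(t)])$, which is $0$; so if $H_2(Y_t,\partial Y_t)\to H_2(N_t,\partial N_t)$ were injective on $j([f(t)])$ we would be done, but since we only know $H_2(Y_t,\partial_0 Y_t)\to H_2(N_t,\partial N_t)$ is an isomorphism I would instead observe directly that the composite $H_2(Y_t,\partial_0 Y_t)\xrightarrow{j} H_2(Y_t,\partial Y_t)\to H_2(N_t,\partial N_t)$ equals the composite through $H_2(N_t,\partial B)$, which kills $[f(t)]$; hence $j([f(t)])$ maps to $0$ in $H_2(N_t,\partial N_t)$, and — this is the one genuinely delicate point — I must upgrade this to $j([f(t)])=0$ in $H_2(Y_t,\partial Y_t)$ itself.

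The main obstacle is exactly that last upgrade: knowing a class in $H_2(Y_t,\partial Y_t)$ dies in $H_2(N_t,\partial N_t)$ is weaker than it being zero, since $Y_t\hookrightarrow N_t$ need not be $\pi_1$- or $H_2$-injective on this pair (and $\partial N_t$ differs from $\partial_0 Y_t$). I expect to handle it by factoring through $H_2(N_t,\partial B)$ and using that the bottom map's kernel, computed from the triple sequence, is the image of $H_2(\partial N_t,\partial B)$, which (by the structural description of $\partial N_t$ in terms of $\partial_0 Y_t$, $\partial_0 Q_i$, $\partial_0 Q_\Theta$, all of whose non-$t$ pieces are contractible or have infinite cyclic $\pi_1$ meeting $\partial B$) is generated by classes whose pullback to $H_2(Y_t,\partial Y_t)$ is visibly supported on $\partial Y_t$ and hence trivial there — so that the only preimage of $j([f(t)])$ being detected is $[f(t)]$ itself up to such boundary-supported classes, forcing $j([f(t)])=0$. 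If that route proves awkward, the fallback is to bound the rank of $H_2(Y_t,\partial Y_t)$ via a Mayer--Vietoris / long-exact-sequence computation analogous to Propositions~\ref{prop2}--\ref{prop5} and check that $[f(t)]$ cannot contribute a nonzero class there, but I would try the factorization argument first as it reuses Propositions~\ref{prop5} and \ref{prop6} directly.
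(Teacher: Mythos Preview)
Your argument contains a circular step. You claim that $[f(t)]$ dies in $H_2(N_t,\partial N_t)$ because ``$f_t$ is a map of pairs $(M_t,\partial M_t)\to(N_t,\partial N_t)$'', so that $f_t(t)$ is carried by $\partial N_t$. But $f$ is only assumed to be a homotopy equivalence $M\to N$ respecting the graph-of-groups decompositions (it sends $V_0$ to $W_0$, $V_1$ to $W_1$, and is a homeomorphism on edge spaces); it is \emph{not} assumed to send $\partial M$ into $\partial N$. Arranging that $f$ carries $\partial_0 X$ into $\partial_0 Y$ is precisely the goal of Theorem~\ref{thm1}, and Proposition~\ref{prop7} is the key step toward it. So you cannot invoke $f_t(t)\subset\partial N_t$, even up to homotopy; that is what remains to be proved. (A smaller issue: your left vertical map $H_2(Y_t,\partial_0 Y_t)\to H_2(N_t,\partial B)$ is not induced by an inclusion of pairs, since $\partial B\subset\partial_0 Y_t$ rather than the reverse. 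One can still chase the specific class $[f(t)]$, whose boundary does lie in $\partial B$, but the square as drawn is not well-defined.)

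The paper's proof sidesteps this entirely with a rank argument. From the triple $(N_t,\partial N_t,\partial B)$ one has the exact sequence
\[
0\to H_2(\partial N_t,\partial B)\to H_2(N_t,\partial B)\to H_2(N_t,\partial N_t),
\]
and Proposition~\ref{prop4} gives $\operatorname{rank} H_2(N_t,\partial N_t)=n$ while Proposition~\ref{prop6} gives $H_2(N_t,\partial B)\cong\mathbb{Z}^{n+1}$ on the generators $[f(t)],[B_1],\dots,[B_n]$. Hence the images of these $n+1$ classes in $H_2(N_t,\partial N_t)$ are linearly dependent. Pulling back through the isomorphism $H_2(Y_t,\partial_0 Y_t)\xrightarrow{\sim}H_2(N_t,\partial N_t)$ of Proposition~\ref{prop3}, they are dependent in $H_2(Y_t,\partial_0 Y_t)$. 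Since each $[B_i]$ already dies under $H_2(Y_t,\partial_0 Y_t)\to H_2(Y_t,\partial Y_t)$ (as $B_i\subset\partial_1 Y_t\subset\partial Y_t$), the dependence forces $[f(t)]$ to die as well. The point is that one never needs to locate $f_t(t)$ relative to $\partial N_t$; the rank drop from $n+1$ to $n$ does the work that your appeal to a non-existent map of pairs was meant to do.
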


\begin{proof}
Consider the long exact homology sequence of the triple $(N_{t},\partial
N_{t},\partial B)$. As $N_{t}$ is not compact, we have $H_{3}(N_{t},\partial
N_{t})=0$, as in the proof of Proposition \ref{prop4}. Thus we obtain the
exact sequence%
\[
0\rightarrow H_{2}(\partial N_{t},\partial B)\rightarrow H_{2}(N_{t},\partial
B)\rightarrow H_{2}(N_{t},\partial N_{t}).
\]

We do not know that the boundary map $H_{2}(N_{t},\partial N_{t})\rightarrow
H_{1}(\partial N_{t},\partial B)$ is zero, nor do we know the rank of
$H_{2}(\partial N_{t},\partial B)$. However Proposition \ref{prop4} tells us
that $H_{2}(N_{t},\partial N_{t})$ is free abelian of rank $n$. Thus the image
of the map $H_{2}(N_{t},\partial B)\rightarrow H_{2}(N_{t},\partial N_{t})$
has rank at most $n$. In particular, the generators $[f(t)]$ and
$[B_{1}],\dots,[B_{n}]$ of $H_{2}(N_{t},\partial B)$ are mapped to dependent
elements of $H_{2}(N_{t},\partial N_{t})$. Now Proposition \ref{prop3} tells
us that $H_{2}(Y_{t},\partial_{0}Y_{t})\rightarrow H_{2}(N_{t},\partial
N_{t})$ is an isomorphism. Thus the elements $[f(t)]$ and $[B_{1}%
],\dots,[B_{n}]$ are dependent elements of $H_{2}(Y_{t},\partial_{0}Y_{t})$.
As the map $H_{2}(Y_{t},\partial_{0}Y_{t})\rightarrow H_{2}(Y_{t},\partial
Y_{t})$ sends each $[B_{i}]$ to zero, it must also send $[f(t)]$ to zero, as required.
\end{proof}

Now we come to the key argument.

\begin{proposition}
\label{prop8}Using the above notation, let $\overline{t}$ be a component of
$\partial_{0}X$ with non-empty boundary, such that $\pi_{1}(\bar{t})$ has
infinite index in $\pi_{1}(X)$, and suppose that no component of $\partial
_{0}X$ is an annulus. Then there is a component $S$ of $\partial_{0}Y_{t}$
whose boundary is contained in $\partial B$, and contains exactly one
component from each $\partial B_{i}$, such that $\pi_{1}(S)\rightarrow\pi
_{1}(N_{t})$ is an isomorphism.
\end{proposition}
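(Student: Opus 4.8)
The plan is to reconstruct $Y_t$ from the pieces $Y_t \cup (\cup Q_i) \cup (\cup Q_\Theta) = N_t$ exactly as in the proof of Proposition \ref{prop3}, and then use Proposition \ref{prop7} to pin down a single ``spanning'' component of $\partial_0 Y_t$. First I would recall the picture from Proposition \ref{prop3}: $\partial_1 Y_t$ consists of the annuli $B_1,\dots,B_n$ together with contractible pieces $\Theta$, each $B_i$ lies in a piece $Q_i$ of the closure of $N_t - Y_t$ with $B_i \hookrightarrow Q_i$ a homotopy equivalence, and $Y_t \hookrightarrow N_t$ is a homotopy equivalence so that $\pi_1(Y_t) \cong \pi_1(N_t)$. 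Since $b_i$ and $b_i'$ are joined by the annulus $B_i$, and each $b_i, b_i'$ lies in a component of $\partial_0 Y_t$, the combinatorics of which $b_i,b_i'$ lie on which component of $\partial_0 Y_t$ is governed by a graph whose edges are the $B_i$'s. The element $[f(t)] \in H_2(Y_t,\partial_0 Y_t)$ is, by Proposition \ref{prop6} and the identification $H_2(Y_t,\partial_0 Y_t)\cong H_2(N_t,\partial N_t)$ of Proposition \ref{prop3}, a class whose boundary in $H_1(\partial_0 Y_t)$ is the sum of the classes $[\partial_0 f(t)]$; I would make this precise by tracking $[t] \in H_2(M_t,\partial A)$ through the homotopy equivalence and through the short exact sequences of Proposition \ref{prop5}. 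Proposition \ref{prop7} tells us $[f(t)]$ maps to zero in $H_2(Y_t,\partial Y_t)$, which by the long exact sequence of the triple $(Y_t,\partial Y_t,\partial_0 Y_t)$ means $[f(t)]$ lies in the image of $H_2(\partial Y_t,\partial_0 Y_t)$; but $\partial Y_t / \partial_0 Y_t$ collapses precisely the $B_i$'s (the $\Theta$'s being contractible), so $[f(t)]$ is a $\mathbb Z$--combination $\sum c_i [B_i]$ of the annulus classes in $H_2(Y_t,\partial_0 Y_t)$.

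The heart of the argument is then to extract a genuine component $S$ of $\partial_0 Y_t$ from this homological information. I would argue as follows: the relation $[f(t)] = \sum c_i [B_i]$ in $H_2(Y_t,\partial_0 Y_t)$ forces, upon taking the boundary map to $H_1(\partial_0 Y_t)$, a chain-level relation between $\partial_0 f(t)$ and the $\partial_0 B_i = b_i - b_i'$. Because the only non-contractible components of $\partial_0 Y_t$ other than $f(t)$ have infinite cyclic fundamental group (carried by some $b_i$ or $b_i'$) and the rest are contractible, the first homology $H_1(\partial_0 Y_t)$ is a free abelian group with one generator for $f(t)$ (well, one for a chosen loop in $f(t)$ — here I should be careful, since $f(t)$ need not be a sphere; but $[t]$ and hence $[f(t)]$ is a relative class so its boundary lives in the free part generated by the peripheral loops, namely the $a_i'$-type loops), and the relation then says each such peripheral circle of $f(t)$ equals $\pm$ one of the circles $b_i$ or $b_i'$. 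Running the graph-of-pieces description, $f(t)$ shares exactly one boundary circle with each $B_i$: the circle $b_i$ (after relabelling), and the $b_i'$ end of each $B_i$ must be attached to a \emph{different} component of $\partial_0 Y_t$, else we would get an annulus $B_i$ with both ends on $f(t)$ contradicting (as in Proposition \ref{prop0}(1), transported to $Y_t$) the fact that $f(t)$ is not an annulus. Set $S = f(t)$, the component of $\partial_0 Y_t$ so obtained; by construction $\partial S \subset \partial B$ and $S$ meets each $\partial B_i$ in exactly one circle.

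It remains to show $\pi_1(S) \to \pi_1(N_t)$ is an isomorphism. Here I would use the retraction and degree argument familiar from the earlier lemmas: collapsing all the contractible pieces and all of $\partial_0 Y_t$ except $S$, and using that $S$ together with the $B_i$'s and the other infinite-cyclic boundary circles bound, one sees that the relative class $[f(t)] - \sum c_i[B_i] = 0$ forces $S$, capped off along its peripheral circles by the annuli $B_i$ (and by the remaining cyclic boundary components of the $Q_i$), to represent a top class. More directly: by Proposition \ref{prop4} the group $H_2(N_t,\partial N_t)\cong H_2(Y_t,\partial_0 Y_t)$ has rank $n$ and is freely generated by $[B_1],\dots,[B_n]$ together with the relation expressing $[f(t)]$ — so the long exact sequence of $(Y_t,\partial_0 Y_t)$ shows $H_1(\partial_0 Y_t) \to H_1(Y_t)$ has image complementary to the span of the $b_i$'s, which combined with $\pi_1(Y_t)\cong\pi_1(N_t)$ and the fact that $S$ is the only component carrying the rest of the homology, shows $H_1(S) \to H_1(Y_t)$ is surjective; since $S$ is closed or has cyclic-boundary and $Y_t$ is a $K(\pi,1)$, a standard degree-one argument (as used for the closed case in Lemma \ref{casewhentisclosed} and in Proposition \ref{prop0}) upgrades this to $\pi_1(S) \xrightarrow{\ \cong\ } \pi_1(Y_t) \cong \pi_1(N_t)$. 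The main obstacle I anticipate is the bookkeeping in the middle paragraph: making sure that the homological relation $[f(t)] = \sum c_i[B_i]$ really does translate into the combinatorial statement that $S = f(t)$ picks up exactly one circle from each $\partial B_i$ — in particular ruling out that some $B_i$ has both ends on $S$, and ruling out cancellation among the $c_i$ that would hide a boundary circle. The tools for this (no essential annulus in $\partial_1$ of a $V_1$--vertex except into an edge annulus; infinite-cyclic-fundamental-group boundary pieces) are all already in place from Proposition \ref{prop0}, so this should go through.
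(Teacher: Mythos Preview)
Your proposal contains a genuine gap at the crucial step. You write ``Set $S = f(t)$, the component of $\partial_0 Y_t$ so obtained,'' but $f(t)$ is \emph{not} a component of $\partial_0 Y_t$. The homotopy equivalence $f$ carries $X$ to $Y$ and is a homeomorphism on $\partial_1 X$, but there is no reason for $f$ to send $\partial_0 X$ into $\partial_0 Y$; indeed, arranging this is the entire content of the theorem. So $f(t)$ is only a singular surface in $Y_t$ with $\partial f(t) = \cup b_i$, and your subsequent combinatorial analysis of ``which $b_i$ lie on $f(t)$'' has no meaning. The homological relation $[f(t)] = \sum c_i[B_i]$ you correctly derive from Proposition~\ref{prop7} does not by itself single out any component of $\partial_0 Y_t$.

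The paper bridges this gap by passing from homology to an actual embedded surface. Since $[f(t)]$ dies in $H_2(Y_t,\partial Y_t)$, the cycle $f(t)$ is homologous, rel $\partial Y_t$, to a $2$--chain in $\partial Y_t$; because $\partial Y_t$ is a $2$--manifold and the $b_i$ are disjoint embedded circles, that chain can be taken to be an honest compact subsurface $S \subset \partial Y_t$ with $\partial S = \cup b_i$. One then removes from $S$ any $B_i$ it contains, arriving at $S \subset \partial_0 Y_t$ with $\partial S$ meeting each $\partial B_i$ in exactly one circle. For the $\pi_1$--isomorphism, the paper does not argue via $H_1$--surjectivity as you propose, but instead uses the inverse equivalence: for a component $s$ of $S$, the composite $s \xrightarrow{g_t} X_t \xrightarrow{\rho} t$ is $\pi_1$--injective and sends distinct boundary circles of $s$ homeomorphically to distinct boundary circles of $t$, hence has degree~$1$ and is a $\pi_1$--isomorphism; this simultaneously shows $S$ is connected. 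Your homological route to the $\pi_1$--isomorphism is also incomplete as stated, since $H_1$--surjectivity for a bounded surface does not immediately give $\pi_1$--surjectivity without a degree argument of this kind.
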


\begin{proof}
As $\partial t$ is the union of the $a_{i}$'s, it follows that $f(\partial t)$
is the union of the $b_{i}$'s. Thus Proposition \ref{prop7} implies that there
is a (possibly disconnected) compact surface $S$ in $\partial Y_{t}$ whose
boundary is the union of the $b_{i}$'s. If $S$ is not contained in
$\partial_{0}Y_{t}$, it must contain some $B_{i}$. By replacing $S$ by the
closure of $S-B_{i}$, we can replace $S$ by a new (possibly disconnected)
compact surface in $\partial Y_{t}$ whose boundary is contained in $\partial
B$, and contains exactly one component from each $\partial B_{i}$. By
repeating this process as needed, we will eventually find a (possibly
disconnected) compact surface $S$ in $\partial_{0}Y_{t}$ whose boundary is
contained in $\partial B$, and contains exactly one component from each
$\partial B_{i}$.

Let $s$ be a component of $S$ and consider the composite map $s\overset{g_{t}%
}{\rightarrow}X_{t}\overset{\rho}{\rightarrow}t$, where $\rho$ is a retraction
of $X_{t}$ to $t$. The resulting map is $\pi_{1}$--injective, carries each
boundary component of $s$ to a boundary component of $t$ by a homeomorphism,
and sends distinct components of $\partial s$ to distinct components of
$\partial t$. It follows that the composite map $s\rightarrow t$ has degree
$1$. Hence it is onto on $\pi_{1}$, and so an isomorphism on $\pi_{1}$. It
follows that this map $s\rightarrow t$ is properly homotopic to a
homeomorphism, so that $s$ must be equal to $S$. Hence $S$ satisfies the
conclusion of the proposition, as required.
\end{proof}

Now we are ready to complete the proof of Theorem \ref{thm1}.

\begin{theorem}
\label{thm1}Let $(G,\partial G)$ and $(H,\partial H)$ be two $PD3$ pairs with
$G$ isomorphic to $H$, and let $\Gamma_{G}$ and $\Gamma_{H}$ be the
corresponding isomorphic bipartite graphs of groups. If $v$ in $\Gamma_{G}$
and $w$ in $\Gamma_{H}$ are corresponding $V_{1}$--vertices, then the
isomorphism carries $\partial_{1}v$ to $\partial_{1}w$, and $\partial_{0}v$ to
$\partial_{0}w$, and $\partial v$ isomorphically to $\partial w$.
\end{theorem}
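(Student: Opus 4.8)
The plan is to assemble the theorem from the case analysis already carried out. The statement that the isomorphism carries $\partial_1 v$ to $\partial_1 w$ is not new: as explained in the introduction, it follows from the fact that $\Gamma_G$ depends only on $G$ (the result of \cite{SS05}). So the real content is that $\partial_0 v$ is carried to $\partial_0 w$, equivalently that $f$ can be homotoped to carry $\partial_0 X$ to $\partial_0 Y$ by a homeomorphism; once both $\partial_0 X \to \partial_0 Y$ and $\partial_1 X \to \partial_1 Y$ are homeomorphisms compatible on their common boundary circles, the induced map $\partial X \to \partial Y$ is automatically a homeomorphism. Thus I would reduce everything to: for each component $\overline t$ of $\partial_0 X$, homotop $f$ so that $\overline t$ is carried homeomorphically onto a component of $\partial_0 Y$, doing this compatibly across all components.

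First I would dispose of the special cases. If some component of $\partial_0 X$ is an annulus, or if some non-closed component $\overline t$ has $\pi_1(\overline t)$ of finite index in $\pi_1(X)$, then Lemma \ref{casewhentisanannulus} applies: $v$ and $w$ are isolated $V_1$--vertices, and $f$ can be homotoped (using a flip if necessary) to carry $\partial_1 X$, $\partial_0 X$, and hence $\partial X$, homeomorphically to their counterparts, and we are done. Next, if $\overline t$ is a closed component, Lemma \ref{casewhentisclosed} homotops $f$ so that $\overline t$ maps homeomorphically onto a component of $\partial_0 Y$. So we may assume every component $\overline t$ of $\partial_0 X$ has non-empty boundary, is not an annulus, and has $\pi_1(\overline t)$ of infinite index in $\pi_1(X)$ --- exactly the hypotheses under which Propositions \ref{prop0}--\ref{prop8} operate.

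Now I would invoke Proposition \ref{prop8}: working in the covers $M_t$ and $N_t$, it produces a component $S$ of $\partial_0 Y_t$ whose boundary is contained in $\partial B$, meeting each $\partial B_i$ in exactly one circle, with $\pi_1(S) \to \pi_1(N_t)$ an isomorphism. Since $\pi_1(t)\to\pi_1(M_t)$ and $\pi_1(N_t)\cong\pi_1(M_t)$, the surface $S$ carries $\pi_1(N_t)=\pi_1(Y_t)$, so $f_t$ restricted to $t$ can be deformed into $S$; the degree-one argument in the proof of Proposition \ref{prop8} shows this deformed map $t \to S$ is properly homotopic to a homeomorphism. Pushing this deformation down via the covering projection $q_t : N_t \to N$, the surface $S$ maps to a component $\overline s$ of $\partial_0 Y$ (it lies in $\partial_0 Y_t$, hence projects into $\partial N$, and being compact with $\pi_1$ of infinite index in $\pi_1(Y)$ it is not a torus, so it embeds), and we obtain a homotopy of $f$ carrying $\overline t$ homeomorphically onto $\overline s$.

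The one point requiring care --- and the main obstacle --- is \emph{compatibility}: the deformations for the various components $\overline t$ of $\partial_0 X$ must be performed so that the resulting map on $\partial_1 X$ stays a homeomorphism and the boundary circles $a_i$ of the $\overline t$'s are carried to the correct circles $b_i$ of $\partial_1 Y$. Here the bookkeeping set up before Proposition \ref{prop1} is what saves us: each annulus $A_i$ of $\partial_1 X_t$ meeting $t$ has $\partial A_i = \{a_i, a_i'\}$ with $a_i \subset t$, the corresponding $B_i$ has $\partial B_i = \{b_i, b_i'\}$, and $f_t$ carries $A_i$ to $B_i$ with $a_i \mapsto b_i$ --- after possibly applying flips on the relevant annuli of $\partial_1 X$, which are supported in small neighbourhoods of those annuli and so do not disturb work already done elsewhere. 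Since $S$ meets each $\partial B_i$ in exactly the circle $b_i$ (after relabelling, the component distinct from $b_i'$), the homeomorphism $\overline t \to \overline s$ agrees on $\partial \overline t$ with the already-fixed homeomorphism $\partial_1 X \to \partial_1 Y$ on the circles $a_i$. Running this over all components of $\partial_0 X$, and then over the closed components via Lemma \ref{casewhentisclosed}, we get a single homotopy of $f$ after which $f|_{\partial X} : \partial X \to \partial Y$ is a homeomorphism. This carries $\partial_0 v$ to $\partial_0 w$ and $\partial v$ isomorphically to $\partial w$, completing the proof.
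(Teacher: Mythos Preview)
Your outline follows the paper's approach closely: reduce via Lemmas \ref{casewhentisanannulus} and \ref{casewhentisclosed}, then for each non-closed non-annular $\overline t$ with $\pi_1(\overline t)$ of infinite index invoke Proposition \ref{prop8} to find $S$, and use flips to match $\partial t$ with $\partial S$. However, the compatibility paragraph glosses over a genuine difficulty that the paper handles explicitly.

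The issue is your phrase ``after relabelling, the component distinct from $b_i'$''. The labels $b_i, b_i'$ are \emph{not} free: by convention $b_i = f_t(a_i)$, so relabelling amounts to performing a flip on the image annulus $p_t(A_i)$ in $\partial_1 X$. When $p_t(A_1) = p_t(A_2)$ --- i.e.\ when a single annulus $A$ of $\partial_1 X$ has both boundary circles in $\partial\overline t$ and hence lifts to two distinct annuli $A_1, A_2$ meeting $t$ --- a flip on $A$ swaps $b_1 \leftrightarrow b_1'$ and $b_2 \leftrightarrow b_2'$ \emph{simultaneously}. So you cannot independently arrange that $\partial S$ contains $b_1$ and $b_2$: a priori $\partial S$ might contain $b_1$ and $b_2'$, and no flip would repair this. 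The paper's proof treats this as Case~1 and uses a branched-surface argument (the image of $S$ together with the images of $B_1, B_2$ in $\partial Y$ would have a triple point at $q_t(b_1)$, contradicting that $\partial Y$ is a surface) to rule out the mixed configuration. This step is missing from your argument.

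Two further points the paper verifies and you do not: first, that distinct components $\overline t$, $\overline r$ of $\partial_0 X$ are carried to distinct components of $\partial_0 Y$ (argued by producing an essential annulus in $X$ not homotopic into $\partial_1 X$ if they coincided); second, that when an annulus $A$ of $\partial_1 X$ joins two different components $\overline t$ and $\overline r$, the flip possibly required for $\overline r$ does not undo the matching already achieved for $\overline t$. Your claim that flips ``do not disturb work already done elsewhere'' is false precisely in this shared-annulus situation and needs the short argument the paper gives.
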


\begin{proof}
If there is a component $\overline{t}$ of $\partial_{0}X$, which is an
annulus, or if there is a component $\overline{t}$ of $\partial_{0}X$ such
that $\overline{t}$ is not closed, and $\pi_{1}(\bar{t})$ has finite index in
$\pi_{1}(X)$, then the result was proved in Lemma \ref{casewhentisanannulus}.

Now we consider the remaining cases. We need to show that for each component
$\overline{t}$ of $\partial_{0}X$, we can homotop $f:M\rightarrow N$ to
arrange that $f$ maps $\overline{t}$ by a homeomorphism to a component of
$\partial_{0}X$, and that distinct components of $\partial_{0}X$ are sent to
distinct components of $\partial_{0}Y$.

If $\overline{t}$ is a component of $\partial_{0}X$, which is a closed
surface, then Lemma \ref{casewhentisclosed} shows that we can homotop
$f:M\rightarrow N$ to arrange that $f$ maps $\overline{t}$ by a homeomorphism
to a component of $\partial_{0}X$.

Now suppose that no component of $\partial_{0}X$ is an annulus, and let
$\overline{t}$ be a component of $\partial_{0}X$, which has boundary, such
that $\pi_{1}(\bar{t})$ has infinite index in $\pi_{1}(X)$. Proposition
\ref{prop8} produces a component $S$ of $\partial_{0}Y_{t}$ such that
$\partial S$ consists of one and exactly one from each pair $\{b_{j}%
,b_{j}^{\prime}\}$, and $\pi_{1}(S)\rightarrow\pi_{1}(N_{t})$ is an
isomorphism. Let $\overline{S}$ denote the image in $\partial_{0}Y$ of $S$.

Under our indexing, $f(a_{i})=b_{i}$ and $\partial S$ contains only one of
$\{b_{i},b_{i}^{\prime}\}$. Thus by deforming along $B_{i}$ if necessary, we
can push $f_{t}(t)$ homeomorphically onto $S$. We will use flips to alter $f$
to arrange that, after applying these flips, $f(\overline{t})$ can be deformed
into $\overline{S}$ while fixing $\partial\overline{t}$. By repeating for all
components of $\partial_{0}X$, we will arrange that, after applying certain
flips, $f(\partial X)$ can be deformed to a homeomorphism from $\partial X$ to
$\partial Y$, while fixing $\partial_{1}X$.

There are two cases to consider, depending on whether two of the $A_{i}$ have
the same image in $M$, under $p_{t}$.

\textit{Case 1:} $p_{t}(A_{1})=p_{t}(A_{2})$, and so $q_{t}(B_{1})=q_{t}%
(B_{2})$.

Recall that in our notation, $f_{t}(a_{1})=b_{1}$ and $f_{t}(a_{2})=b_{2}$.
Thus under these identifications, we have $p_{t}(a_{1}^{\prime})=p_{t}(a_{2}%
)$, $p_{t}(a_{1})=p_{t}(a_{2}^{\prime})$, $q_{t}(b_{1}^{\prime})=q_{t}(b_{2}%
)$, $q_{t}(b_{1})=q_{t}(b_{2}^{\prime})$. Moreover, $\partial S$ contains one
of $\{b_{1},b_{1}^{\prime}\}$ and one of $\{b_{2},b_{2}^{\prime}\}$.

\textit{Case 1a:} $\partial S$ contains $b_{1}=f_{t}(a_{1})$.

We claim that in this case $b_{2}=f_{t}(a_{2})$ is in $\partial S$. If not
$\partial S$ contains $b_{1}$ and $b_{2}^{\prime}$ which are identified under
$q_{t}$. Thus, the image of $\partial X_{t}$ contains the image of $S$ with
$b_{1}$ and $b_{2}^{\prime}$ identified and also the image of $B_{1}$ and
$B_{2}$. Thus we have a branched surface at $q_{t}(b_{1})$ with three
branches, whereas $\partial X=\partial_{0}X\cup\partial_{1}X$ is a closed
surface. Therefore, if $\partial S$ contains $b_{1}$, it also contains $b_{2}$
and so we can homotop $f_{t}(t)$ into $S$ fixing $\partial t$. Hence we can
homotop $f(\overline{t})$ into the image of $S$ in $N$ fixing $\partial
\overline{t}$, as required.

\textit{Case 1b:} $\partial S$ contains $b_{1}^{\prime}=f_{t}(a_{1}^{\prime})$.

Arguing as in case 1a), it follows that $\partial S$ must consist of
$b_{1}^{\prime}$ and $b_{2}^{\prime}$. We also have $f_{t}(a_{1})=b_{1}$ and
$f_{t}(a_{2})=b_{2}$. Thus it is not possible to homotop $f_{t}(t)$ into $S$
fixing $\partial t$. However, if we compose $f$ with simultaneous flips on
$A_{1}$ and $A_{2}$, the images of $a_{1}$ and $a_{1}^{\prime}$ are
interchanged, as are the images of $b_{1}$ and $b_{1}^{\prime}$. We have now
arranged that $f_{t}(a_{1})$ and $f_{t}(a_{2})$ are in $\partial S$, so that
we can homotop $f_{t}(t)$ into $S$ fixing $\partial t$. Hence we can homotop
$f(\overline{t})$ into $\overline{S}$ while fixing $\partial\overline{t}$, as required.

\textit{Case 2:} $p_{t}(A_{1})\neq p_{t}(A_{2})$, and so $q_{t}(B_{1})\neq
q_{t}(B_{2})$.

This case is easier. If $f(a_{i})=b_{i}$ is in $\partial S$, then we leave $f$
as it is. Otherwise, we flip on $A_{i}$ to arrange that $f(a_{i})=b_{i}$.
After a finite number of such flips we have new maps $f$, $f_{t}$ which send
$\partial t$ homeomorphically onto $\partial S$ and which induce the previous
map on the fundamental groups and thus map $\pi_{1}(t)$ isomorphically to
$\pi_{1}(S)$. In particular, we can homotop $f_{t}(t)$ into $S$ fixing
$\partial t$, as required.

In conclusion, we have shown the following. We start with a given map
$f:M\rightarrow N$ such that $f(V_{0})\subset W_{0}$, $f(V_{1})\subset W_{1}$,
and $f$ is a homeomorphism on the edge spaces $V_{0}\cap V_{1}$. Suppose that
$\partial_{0}X$ has no annulus component, and also has no component
$\overline{t}$ such that $\overline{t}$ is not closed, and $\pi_{1}(\bar{t})$
has finite index in $\pi_{1}(X)$. If $\overline{t}$ is a component of
$\partial_{0}X$, then after applying flips on some of the annuli in
$\partial_{1}X$ which meet $\overline{t}$, we can homotop $f(\overline{t})$
fixing $\partial\overline{t}$, to arrange that $f$ maps $\overline{t}$ to a
component $\overline{S}$ of $\partial_{0}Y$ by a homeomorphism.

Now we examine how to inductively extend the above procedure to the remaining
components of $\partial_{0}X$. Let $\bar{r}\neq\bar{t}$ be a component of
$\partial_{0}X$. As above, after applying flips on some of the annuli in
$\partial_{1}X$ which meet $\overline{r}$, we can homotop$f(\overline{r})$
fixing $\partial\overline{r}$, to arrange that $f$ maps $\overline{r}$ to a
component $\overline{R}$ of $\partial_{0}Y$ by a homeomorphism. We claim that
$\overline{R}$ cannot equal $\overline{S}$. For if they were equal, then
$\bar{r}$ and $\overline{t}$ would be homotopic to each other in $X$. As
neither is an annulus, choosing a non-peripheral curve in $\overline{t}$
yields a $\pi_{1}$--injective annulus joining $\bar{r}$ and $\overline{t}$
which cannot be homotopic into $\partial_{1}X$ while keeping its boundary in
$\partial_{0}X$. This is an essential annulus in $(M,\partial M)$ which cannot
be homotoped into a $V_{0}$--vertex which is impossible.

We conclude that $\overline{R}$ cannot equal $\overline{S}$, so that they are
disjoint. Now there is no problem except possibly if there is an annulus
component $A$ of $\partial_{1}X$ which meets both $\bar{r}$ and $\bar{t}.$ If
we need to flip on $A$ in order to map $\overline{r}$ to $\overline{R}$, then
before the flip, the circle $A\cap\overline{r}$ must be mapped to the same
component of $\partial B$ as $A\cap\overline{t}$, which is impossible, as the
restriction of $f$ to $A$ is a homeomorphism.

Thus after a finite number of steps we can modify $f$ by a homotopy, which is
a product of flips, which preserves the decomposition, and thus carries each
group in $\partial_{1}X$ to a group in $\partial_{1}Y$ and similarly
$\partial_{0}X$ to $\partial_{0}Y$ and giving a homeomorphism $\partial X$ to
$\partial Y$. This completes the proof of Theorem \ref{thm1}.
\end{proof}

\section{An elementary proof of Johannson's Deformation
Theorem\label{applnsto3-manifolds}}

The previous section of this paper yields a new proof of Johannson's
Deformation Theorem, assuming three key ingredients, the JSJ decomposition
theorem and its enclosing property, \cite{JacoShalen} and \cite{Johannson},
and the Annulus Theorem. The literature in this area is large and contains a
confusing tangle of interlocking papers and results, some of which use the
same terms but with different definitions. After giving a brief sketch of the
history, we will show how to carve a path through the literature which yields
a complete and elementary proof of Johannson's Deformation Theorem.

We will start by discussing the JSJ decomposition of an orientable Haken
$3$--manifold $M$ with incompressible boundary, and its enclosing property. In
\cite{JacoShalen} and \cite{Johannson}, the authors used somewhat different
arguments to prove the existence and uniqueness of the JSJ decomposition, and
its enclosing property. Johannson \cite{Johannson} based his arguments on his
theory of boundary patterns, which is a way of dealing with $M$ by cutting it
into balls. Indirectly this comes down to arguing by induction on the length
$l(M)$ of the hierarchy of $M$. Jaco and Shalen \cite{JacoShalen} on the other
hand argued directly by induction on $l(M)$. In each case, the authors found
it necessary to formulate and prove a relative version of the
JSJ\ decomposition, in order that their induction step would work. In each
case, the authors showed that their decomposition was unique, in the sense
that homeomorphic $3$--manifolds had homeomorphic JSJ decompositions. The
Enclosing Property of the JSJ decomposition of $M$ is that any essential map
of an annulus or torus into $M$ can be properly homotoped into the
characteristic submanifold $V(M)$. This is not quite enough to characterise
their decomposition. That requires a more complicated statement in terms of
essential maps of Seifert pairs into $M$.

These original papers are complicated and hard to read. An elementary proof of
the existence of the JSJ\ decomposition was given by Neumann and Swarup in
\cite{NS}. Their arguments greatly simplified the subject by concentrating
only on embedded annuli and tori. This meant that they only proved a weak
version of the enclosing property, namely that any essential embedding of an
annulus or torus into $M$ can be properly homotoped into the characteristic
submanifold $V(M)$. The decomposition obtained by Neumann and Swarup in
\cite{NS} is characterised by the fact that it is a decomposition along a
minimal family of essential annuli and tori that decompose $M$ into fibred and
simple non-fibred pieces. In section 4 of chapter V of \cite{JacoShalen}, Jaco
and Shalen show that their decomposition has the same characterisation and so
these decompositions must be equal.

The enclosing property is closely related to the Annulus and Torus Theorems.
Each of these results has a long and complicated history with many different
proofs and many different versions of the statement. See \cite{Sc1} and
\cite{Sc2} for a discussion of this history. In \cite{Sc2}, Scott proved
versions of the Annulus and Torus Theorems, and showed how they implied the
enclosing property of the JSJ decomposition. The arguments in \cite{Sc2} can
be simplified by using the technology of least area surfaces \cite{FHS}. For
the Torus Theorem this has been carried out by Casson (unpublished).

Now let $M$ be an orientable Haken $3$--manifold with incompressible boundary,
and let $G=\pi_{1}(M)$. In chapter 1 of \cite{SS03}, Scott and Swarup briefly
discussed why their graph of groups decomposition $\Gamma_{1,2}(G)$ is closely
analogous to the JSJ\ decomposition of $M$, but the analogy need not be exact.
In \cite{SS05}, Scott and Swarup defined the completion $\Gamma_{1,2}^{c}(G)$
of $\Gamma_{1,2}(G)$ and explained why this is the exact analogue of the JSJ
decomposition. More precisely, if $M$ is a Haken $3$--manifold with
incompressible boundary, and $\pi_{1}(M)=G$, then $\Gamma_{1,2}^{c}(G)$ is
equal to the graph of groups decomposition $\Gamma_{M}$ of $G$ determined by
the frontier $frV(M)$ of $V(M)$. This is because the main results of
\cite{SS02} imply that the splittings of $G$ determined by $frV(M)$ are the
same as the edge splittings of $\Gamma_{1,2}(G)$, from which the result
follows easily. This fact strengthens the uniqueness of the JSJ\ decomposition
of $M$ as it implies this decomposition depends only on $G$, and not on the
homeomorphism type of $M$. This stronger uniqueness result immediately implies
that if $M$ and $N$ are orientable Haken $3$--manifolds with incompressible
boundary, and if $f:M\rightarrow N$ is a homotopy equivalence, then $f$ can be
homotoped so as to preserve their JSJ decompositions. This means that $f$ maps
$V(M)$ to $V(N)$ by a homotopy equivalence, and maps the closure of $M-V(M)$
to the closure of $N-V(N)$ by a homotopy equivalence. This is the starting
point of our proof of Johannson's Deformation Theorem in this paper, and of
the proof given by Scott and Swarup in \cite{SS02}. This stronger uniqueness
result was not known when Johannson and Jaco gave their proofs of the
Deformation Theorem.

Now we turn to the proof of Johannson's Deformation Theorem obtained from the
arguments of the preceding section. Let $M$ and $N$ be orientable Haken
$3$--manifolds with incompressible boundary, and let $f:M\rightarrow N$ be a
homotopy equivalence.

The algebraic proof of part 1) of Lemma \ref{casewhentisanannulus} can be
replaced in the manifold setting by Proposition 3.2 of \cite{NS}. The proofs
of the rest of Lemma \ref{casewhentisanannulus}, and of Lemma
\ref{casewhentisclosed}, and Proposition \ref{prop0} apply unchanged in the
manifold setting. Now Proposition \ref{prop1} implies that the map
$H_{2}(X_{t},\partial_{0}X_{t})\rightarrow H_{2}(X_{t},\partial X_{t})$ (with
$\partial X_{t}=\partial_{0}X_{t}\cup\partial_{1}X_{t}$) is zero. In
\cite{SS02}, this was a crucial step in the proof of Lemma 3.6, but the
argument there works only for manifolds. The proof of Lemma 3.6 of \cite{SS02}
on page 4995, then uses duality to conclude that the map $H_{2}(Y_{t}%
,\partial_{0}Y_{t})\rightarrow H_{2}(Y_{t},\partial Y_{t})$ is also zero.
Thus, in the manifold setting, Proposition \ref{prop7} follows immediately
from Proposition \ref{prop1}, so the proofs of Propositions \ref{prop2} to
\ref{prop7} can be omitted. The arguments in the remainder of section
\ref{section:proofofthemainresult} apply unchanged in the manifold setting. At
this point, we have shown that $f$ can be homotoped so as to map $V(M)$ to
$V(N)$ by a homotopy equivalence, and to map the closure of $M-V(M)$ to the
closure of $N-V(N)$ by a homotopy equivalence which is a homeomorphism when
restricted to the boundary. Now we can complete the proof of Johannson's
Deformation Theorem by applying Waldhausen's classical result
\cite{Waldhausen} to further homotop $f$ to arrange that $f$ maps the closure
of $M-V(M)$ to the closure of $N-V(N)$ by a homeomorphism, as required.

The arguments used on page 4995 of \cite{SS02} are as follows. There is a
duality isomorphism between $H_{2}(X_{t},\partial_{0}X_{t})$ and $H_{c}%
^{1}(X_{t},\partial_{1}X_{t})$, and between $H_{2}(X_{t},\partial X_{t})$ and
$H_{c}^{1}(X_{t})$. Thus the map $H_{c}^{1}(X_{t},\partial_{1}X_{t}%
)\rightarrow H_{c}^{1}(X_{t})$ is zero. As the homotopy equivalence
$f:X\rightarrow Y$ lifts to a proper homotopy equivalence $X_{t}$ to $Y_{t}$,
which restricts to a homeomorphism from $\partial_{1}X_{t}$ to $\partial
_{1}Y_{t}$ it follows that the map $H_{c}^{1}(Y_{t},\partial_{1}%
Y_{t})\rightarrow H_{c}^{1}(Y_{t})$ is zero, so another application of duality
yields the required result. These duality arguments are not available in the
algebraic setting of section \ref{section:proofofthemainresult}, precisely
because the pair $(X,\partial X)$ need not be a $PD3$ pair as the boundary
need not be $\pi_{1}$--injective.

Recall that $M$ is an orientable Haken $3$--manifold with incompressible
boundary, and let $V_{\partial}(M)$ denote the union of all those components
of $V(M)$ which meet $\partial M$. Thus $V_{\partial}(M)$ has the enclosing
property for essential annuli in $M$. We call $V_{\partial}(M)$ the peripheral
characteristic submanifold of $M$. One interesting point about the arguments
in this paper, is that they only involve $V_{\partial}(M)$, rather than all of
$V(M)$. It is an open question whether there is an elementary way to construct
$V_{\partial}(M)$ using the ideas of \cite{NS}. From the algebraic point of
view, the situation is clear. Recall that if $G=\pi_{1}(M)$, then
$\Gamma_{1,2}^{c}(G)$ is equal to $\Gamma_{M}$, the graph of groups structure
of $G$ determined by $frV(M)$. It is now straightforward to deduce that
$\Gamma_{1}^{c}(G)$ is equal to the graph of groups structure of $G$
determined by $frV_{\partial}(M)$. This could also be proved directly without
discussing $\Gamma_{1,2}^{c}(G)$ at all. Note that $\Gamma_{1}(G)$ is defined,
and shown to exist in \cite{SS03}, and its completion $\Gamma_{1}^{c}(G)$ is
defined in \cite{SS05}.

\end{document}